\newcommand{\mathsym}[1]{{}}
\newcommand{\unicode}[1]{{}}
\newtheorem{thm}{Theorem}[section]
\newtheorem{cor}[thm]{Corollary}
\newtheorem{prop}[thm]{Proposition}
\theoremstyle{definition}
\newtheorem{defn}[thm]{Definition}
\newtheorem{rem}[thm]{Remark}
\newtheorem*{defn*}{Definition}
\newtheorem*{rems*}{Remarks}
\newtheorem*{rem*}{Remark}
\numberwithin{equation}{section}
\begin{document}

\title [Singularities of affine equidistants] {Singularities of affine equidistants:\\ projections and contacts.}

\author[W. Domitrz]{W. Domitrz}
\address{Warsaw University of Technology, Faculty of Mathematics and Information Science, ul. Koszykowa 75, 00-662 Warszawa, Poland}
\email{domitrz@mini.pw.edu.pl}
\author[P. de M. Rios]{P. de M. Rios}
\address{Departamento de Matem\'atica, ICMC, Universidade de S\~ao Paulo; S\~ao Carlos, SP, 13560-970, Brazil}
\email{prios@icmc.usp.br}
\author[M. A. S. Ruas]{M. A. S. Ruas}
\address{Departamento de Matem\'atica, ICMC, Universidade de S\~ao Paulo; S\~ao Carlos, SP, 13560-970, Brazil}
\email{maasruas@icmc.usp.br}

\thanks{ W. Domitrz was partially supported by NCN, 
P. de M. Rios was partially supported by FAPESP grant no. 2010/15179-8, M. A. S. Ruas  was partially supported by CNPq grant no. 305651/2011-0 and FAPESP grant no. 2008/54222-6.}

\maketitle

\begin{abstract} Using standard methods for studying singularities of projections and of contacts, we classify the stable singularities of affine $\lambda$-equidistants of $n$-dimensional closed submanifolds of $\mathbb R^q$, for $q\leq 2n$, whenever $(2n,q)$ is a pair of nice dimensions \cite{Mather}.
\end{abstract}

\section{Introduction} When $M$ is a smooth closed curve on the affine plane $\mathbb R^2$, the set of all midpoints of chords connecting pairs of points on $M$ with parallel tangent vectors is called the {\it Wigner caustic} of $M$, or the {\it area evolute} of $M$, or still, the {\it affine $1/2$-equidistant} of $M$, denoted $E_{1/2}(M)$.

The $1/2$-equidistant is generalized to any $\lambda$-equidistant, denoted $E_{\lambda}(M)$, $\lambda\in\mathbb R$, by considering all chords  connecting pairs of points of $M$ with parallel tangent vectors and the set of all points of these chords  which stand in the $\lambda$-proportion to their corresponding pair of points on $M$.  In this case, when $M$ is a curve on $\mathbb R^2$, the local classification of stable singularities of $E_{\lambda}(M)$ is well known \cite{Ber, GZ1}.

The definition of the affine $\lambda$-equidistant of $M$ is generalized to the cases when $M$ is an $n$-dimensional closed submanifold of $\mathbb R^q$, with
$q\leq 2n$, by considering the set of all $\lambda$-points of chords connecting pairs of points on $M$ whose direct sum of tangent spaces do not coincide with $\mathbb R^q$, the so-called {\it weakly parallel pairs} on $M$.

In addition to curves in $\mathbb R^2$, the possible stable singularities of $E_{\lambda}(M)$ have been previously studied in the general setting when $M$ is a hypersurface \cite{GZ1, GZ2}, or when $M$ is a surface in $\mathbb R^4$ \cite{GJ}. The cases of curves in $\mathbb R^2$ and surfaces in $\mathbb R^4$ have also been  studied in the particular setting of Lagrangian submanifolds of affine symplectic spaces \cite{DRs}.

In this paper, we classify the possible  stable singularities of $E_{\lambda}(M)$ in a quite more general circumstance, namely, when the double dimension of $M$, $2n$, and the dimension of the ambient affine space, $q$, form a pair of {\it nice dimensions} \cite{Mather}, see Theorem \ref{nicedim} below.

In order to obtain such a classification, we start in Section 2 by defining an affine $\lambda$-equidistant of $M^n\subset \mathbb R^q$ as the set of critical values of  the $\lambda$-point map (projection) $\pi_{\lambda}:\mathbb R^q\times\mathbb R^q\to\mathbb R^q, (x^+,x^-)\mapsto\lambda x^++(1-\lambda)x^-$ restricted to $M\times M$, thus locally a map $$\tilde{\pi}_{\lambda}:\mathbb R^{2n}\to\mathbb R^q \ ,$$ see Definition
\ref{pilambdamap}, Remark \ref{pilambda} and equation (\ref{tildepi}), below. Then, we also present the characterization of affine equidistants by a contact map,
extending previous construction for the Wigner caustic (\cite{OH, GJ}).

In Section 3 we review the standard $\mathcal K$-equivalence and the classification of  $\mathcal K$-simple singularities \cite{Go2, Mather}, Theorem \ref{k-simple} below. Then,  in Section 4 we
 combine the study of singularities of projections and of contacts, in view of Theorem \ref{main} below (\cite{Mather, Mart}), with emphasis on contact reduction to
  rank $0$ map-germs, Proposition 4.14.

 Our main result is obtained in Section 5. First, in Theorem \ref{genAstable} we apply the Multijet Transversality Theorem \cite{GG} to a $\mathcal K$-invariant stratification of the jet space.  When $(2n,q)$ is a pair of nice dimensions, the relevant strata of this stratification are  the $\mathcal K$-simple orbits in jet space. Then, we use the results of Section 4 in the context of affine equidistants: Proposition  \ref{localrings} and Corollary \ref{locrings2}, as well as equations (\ref{1})-(\ref{codimmax}).
 The following table summarizes our main result, Theorem \ref{mainresult}, which is presented more extensively as subsection \ref{result}. The normal forms for  the $\mathcal A$-stable singularities of the map  $\tilde{\pi}_{\lambda}$ follow the notation of \cite{Go2} (see Theorem \ref{k-simple} below) for the $\mathcal K$-simple  rank-$0$ contact map-germ $$\theta_{\lambda}: (\mathbb R^k,0)\to (\mathbb R^{k-(2n-q)},0) \ ,$$ where $k$ is the degree of parallelism of the pair of points on $M$ joined by the chord (cf. Definition  \ref{parallelism}  and Tables I, II, III in Theorem \ref{k-simple}).
\begin{center}
\begin{table}[h]
    \begin{small}
    \noindent
    \begin{tabular}{|c|c|c|clcl}
            \hline
(n ,  q) & Stable $E_{\lambda}(M)$, $M^n\subset\mathbb R^q$ & Restrictions \\ \hline
(1 , 2) & $A_{\mu}$ & $\mu\leq 2$ \\ \hline
(2 , 3) & $A_{\mu}$ & $\mu\leq 3$ \\ \hline
(2 , 4) &  $A_{\mu}, C^{\pm}_{2,2}$ & $\mu\leq 4$ \\ \hline
(3 , 4) & $A_{\mu}, D^{\pm}_{4}$ & $\mu\leq 4$ \\ \hline
(3 , 5) & $A_{\mu}, D^{\pm}_{4}, D^{\pm}_{5}, S_5$  & $\mu\leq 5$ \\ \hline
(3 , 6) & $A_{\mu}, C^{\pm}_{\rho,\tau},  C_6$ & $\mu\leq 6$, $2\leq\rho\leq\tau$,  $\rho+\tau\leq 6$ \\ \hline
(4 , 5) & $A_{\mu}, D^{\pm}_{4}, D^{\pm}_{5}$ & $\mu\leq 5$  \\ \hline
(4 , 7) & $A_{\mu}, D^{\pm}_{\nu}, E_6, E_7, S_{\beta}, T_7, \tilde{T}_7$ & $\mu\leq 7$, $4\leq\nu\leq 7$,   $5\leq\beta\leq7$\\ \hline
(4 , 8) & $A_{\mu}, C^{\pm}_{\rho,\tau},  C_6, C_8, F_7, F_8$  & $\mu\leq 8$, $2\leq\rho\leq\tau$, $\rho+\tau\leq 8$ \\ \hline
(5 , 6) & $A_{\mu}, D^{\pm}_{\nu}, E_6$ & $\mu\leq 6$, $4\leq \nu\leq 6$  \\ \hline

\end{tabular}
\end{small}
\end{table}
\end{center}

We note that the case $M^4\subset\mathbb R^6$ is absent from the table of results. This is because $(2n=8,q=6)$ is not a pair of nice dimensions (see Theorem \ref{nicedim} below). Similarly,  $(2n, q>6)$ is not  a pair of nice dimensions, for all $ n\geq 5$.  Classification of stable singularities of $E_{\lambda}(M)$, in these cases, lies outside the scope of this paper.

As mentioned before, the cases in the table of results when $(n,q)\in \{(1,2),(2,3),(3,4),(4,5),(5,6)\}$ correspond to hypersurfaces  and have been previously studied in \cite{GZ1,GZ2}, while the case $(n,q)=(2,4)$ was partially studied in \cite{GJ}. On the other hand, the results for the cases when $(n,q)\in\{(3,5),(3,6),(4,7),(4,8)\}$ are entirely new.

We emphasize that, in all of the above, we are excluding the cases of {\it vanishing chords}, that is, when the $\lambda$-point of the chord connecting two points on $M$ touches $M$ because the pair of points on $M$
lies in the diagonal of $M\times M$. Such ``diagonal singularities'' or {\it singularities on shell} for $E_{\lambda}(M)$ possess additional symmetries when $\lambda=1/2$ and these have been studied for the cases of curves on the plane and surfaces in $\mathbb R^4$, both in the general setting \cite{GJ} and in the more particular setting of Lagrangian submanifolds of affine symplectic space \cite{DRs2}. In this paper, we don't study  such singularities on shell for $E_{\lambda}(M)$.

\section{Affine equidistants}

\subsection{Definition of affine equidistants}

Let $M$ be a smooth closed $n$-dimensional submanifold of
the affine space $\mathbb R^{q}$, with $q\leq 2n$. Let $a, b$ be points of $M$ and denote by
$\tau_{a-b}:\mathbb R^q \ni x\mapsto x+(a-b) \in \mathbb R^q$ the
translation by the vector $(a-b)$.
\begin{defn}\label{parallelism} A pair of points $a, b \in M$ ($a\ne b$) is called a
{\bf weakly parallel} pair if
$$T_aM + \tau_{a-b}(T_bM)\ne \mathbb R^q.$$
$\text{codim}(T_aM + \tau_{a-b}(T_bM))$ in $T_a\mathbb R^q$ is
called the {\bf codimension of a weakly parallel pair $a, b$}. We
denote it by $\text{codim}(a,b)$.

A weakly parallel pair $a, b \in M$ is
called {\bf $k$-parallel} if
\begin{equation}\label{k//} \dim(T_aM \cap \tau_{b-a}(T_bM))=k.\end{equation}
If $k=n$ the pair $a, b \in M$ is called {\bf strongly parallel},
or just {\bf parallel}. We also refer to $k$ as the {\bf degree of
parallelism} of the pair $(a,b)$ and denote it by
$\text{deg}(a,b)$.
The degree of parallelism and the codimension of parallelism are
related in the following way:
\begin{equation}\label{deg-codim}
2n-\text{deg}(a,b)=q-\text{codim}(a,b).
\end{equation}
\end{defn}

\begin{defn}
A {\bf chord} passing through a pair $a,b$, is the line
$$
l(a,b)=\{x\in \mathbb R^q|x=\lambda a + (1-\lambda) b, \lambda \in
\mathbb R\}.
$$
\end{defn}
\begin{defn} For a given $\lambda$, an {\bf affine
$\lambda$-equidistant} of $M$, $E_{\lambda}(M)$, is the set of all
$x\in \mathbb R^q$ such that $x=\lambda a + (1-\lambda) b$, for
all weakly parallel pairs $a,b \in M$. $E_{\lambda}(M)$ is also
called a (affine) {\bf momentary equidistant} of $M$.
Whenever $M$ is understood, we write
$E_{\lambda}$ for $E_{\lambda}(M)$.
\end{defn}
Note that, for any
$\lambda$, $E_{\lambda}(M)=E_{1-\lambda}(M)$ and in particular
$E_0(M)=E_1(M)=M$. Thus, the case $\lambda=1/2$ is special:
\begin{defn}$E_{1/2}(M)$ is called the {\bf Wigner caustic} of $M$  \cite{Ber, OH}.
\end{defn}

\subsection{Characterization of affine equidistants by projection}

Consider the product affine space: $\mathbb R^{q}\times \mathbb R^{q}$ with coordinates $(x_+,x_-)$
and the tangent bundle to $\mathbb R^{q}$:  $T\mathbb
R^{q}=\mathbb R^{q}\times \mathbb R^{q}$ with coordinate system
$(x,\dot{x})$ and standard projection
$\pi: T\mathbb R^{q}\ni (x,\dot{x})\rightarrow x \in \mathbb R^{q}$.

\begin{defn} For $\lambda\in\mathbb{R}$, a {\bf $\lambda$-chord transformation}
$$\Gamma_{\lambda}:\mathbb{R}^{q}\times\mathbb{R}^{q}\to T\mathbb{R}^{q} \ , \ (x^+,x^-)\mapsto(x,\dot{x})$$
is a  linear diffeomorphism  defined by
the {\it $\lambda$-point equation}:
\begin{equation}\label{x}
x=\lambda x^+ + (1-\lambda)x^- \ ,
\end{equation}
for the $\lambda$-point $x$, and a {\it chord equation}:
\begin{equation}\label{x.}
\dot{x}=x^+ - x^-.
\end{equation}
\end{defn}

\begin{rem} For our purposes, the choice  (\ref{x.}) for a chord equation is not unique, but is the simplest one. Among other possibilities,
the choice $\dot{x}=\lambda x^+ - (1-\lambda)x^-$ is particularly well suited for the study of affine equidistants of {\it Lagrangian} submanifolds in symplectic space \cite{DRs}.
\end{rem}

Now, let $M$ be a smooth closed $n$-dimensional submanifold of the
affine space $\mathbb R^{q}$ ($2n\ge q$) and consider the product
$M\times M\subset \mathbb R^{q}\times\mathbb R^{q}$. Let $\mathcal
M_{\lambda}$ denote the image of $M\times M$ by
a  $\lambda$-chord transformation,
$$\mathcal M_{\lambda} = \Gamma_{\lambda}(M\times M) \ ,$$
which is a $2n$-dimensional smooth submanifold of $T\mathbb
R^{q}$.

Then we have the following general characterization:
\begin{thm}[\cite{DRs}]\label{gensing}
The set of critical values of the standard projection $\pi:
T\mathbb R^{q}\to\mathbb R^{q}$ restricted to $\mathcal
M_{\lambda}$ is $E_{\lambda}(M)$.
\end{thm}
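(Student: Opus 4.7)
\smallskip

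The plan is to reduce the statement to a direct calculation of the differential of the composition $\pi\circ\Gamma_\lambda$. Since $\Gamma_\lambda$ is a linear diffeomorphism of $\mathbb R^q\times\mathbb R^q$ onto $T\mathbb R^q$, it restricts to a diffeomorphism from $M\times M$ onto $\mathcal M_\lambda$, so $(\pi|_{\mathcal M_\lambda})\circ\Gamma_\lambda$ has exactly the same critical values as $\pi|_{\mathcal M_\lambda}$. But by the $\lambda$-point equation (\ref{x}), this composition is nothing but the restriction to $M\times M$ of the affine map
\[
\pi_\lambda:\mathbb R^q\times\mathbb R^q\to\mathbb R^q,\qquad (x^+,x^-)\mapsto \lambda x^++(1-\lambda)x^-,
\]
so it suffices to identify the critical values of $\pi_\lambda|_{M\times M}$ with $E_\lambda(M)$.

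Next, I would compute the differential at an arbitrary point $(a,b)\in M\times M$. Since $\pi_\lambda$ is affine, its differential at $(a,b)$ sends $(v^+,v^-)\in T_aM\oplus T_bM$ to $\lambda v^++(1-\lambda)v^-\in\mathbb R^q$. After identifying tangent spaces with linear subspaces of $\mathbb R^q$ via translation (so that $T_bM$ is identified with $\tau_{a-b}(T_bM)\subset T_a\mathbb R^q$), the image of $d(\pi_\lambda|_{M\times M})_{(a,b)}$ is exactly
\[
\lambda\,T_aM \;+\;(1-\lambda)\,\tau_{a-b}(T_bM)\;=\;T_aM+\tau_{a-b}(T_bM),
\]
the second equality holding whenever $\lambda\ne 0,1$, because scaling by a nonzero constant does not change a linear subspace. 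Hence, for $\lambda\ne 0,1$, the point $(a,b)$ is critical for $\pi_\lambda|_{M\times M}$ if and only if $T_aM+\tau_{a-b}(T_bM)\ne\mathbb R^q$, which by Definition \ref{parallelism} is exactly the condition that $(a,b)$ be a weakly parallel pair. The corresponding critical value is $\lambda a+(1-\lambda)b$, so the set of critical values is precisely $E_\lambda(M)$.

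Finally, the boundary values $\lambda=0,1$ are handled separately: there $\pi_\lambda|_{M\times M}$ is a trivial projection onto one factor, whose set of critical values is $M=E_0(M)=E_1(M)$, consistent with the claim. The only mildly subtle step is the identification of tangent spaces at different points of $\mathbb R^q$ via translation, which makes the sum $T_aM+\tau_{a-b}(T_bM)$ well-defined and comparable to $\mathbb R^q$; once this bookkeeping is fixed, the argument is simply a restatement of the definitions, so I do not foresee a genuine obstacle.
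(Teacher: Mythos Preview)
The paper does not supply a proof of this theorem; it is quoted from \cite{DRs} and stated without argument, so there is nothing in the present paper to compare your proposal against.

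Your argument is correct and is in fact the natural one: reduce to $\pi_\lambda|_{M\times M}$ via the diffeomorphism $\Gamma_\lambda$, and observe that the image of the differential at $(a,b)$ is $T_aM+\tau_{a-b}(T_bM)$ whenever $\lambda\ne 0,1$. One small point you glide over: the diagonal points $(a,a)\in M\times M$ are also critical (since then the image of the differential is just $T_aM$, of dimension $n<q$), but Definition~\ref{parallelism} explicitly requires $a\ne b$ for a weakly parallel pair, so such points are not accounted for by $E_\lambda(M)$ as defined. This is really an imprecision in the statement (and is consistent with the paper's later exclusion of ``vanishing chords'' on the diagonal) rather than a defect in your reasoning; away from the diagonal your proof is complete.
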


\begin{defn}\label{pilambdamap} For $\lambda\in\mathbb{R}$, the {\bf $\lambda$-point map} is the projection
$$\pi_{\lambda}:  \mathbb R^{q}\times\mathbb R^{q}\to\mathbb R^{q} \ , \ (x^+,x^-)\mapsto x=\lambda x^+ + (1-\lambda)x^- \ .$$
\end{defn}

\begin{rem}\label{pilambda}
Because $\pi_{\lambda}=\pi\circ\Gamma_{\lambda}$ we can rephrase Theorem \ref{gensing}: {\it the  set of critical values of the projection $\pi_{\lambda}$ restricted to $M\times M$ is $E_{\lambda}(M)$}.
\end{rem}

\subsection{Characterization of affine equidistants by contact}

In the literature, if  $M\subset\mathbb R^{2}$ is a smooth curve, the Wigner caustic $E_{1/2}(M)$
has been described in various ways. A particular description says that,
if ${\mathcal R}_a:\mathbb R^{2}\to\mathbb R^{2}$
denotes reflection through $a\in\mathbb R^{2}$, then $a\in E_{1/2}(M)$ when $M$ and  ${\mathcal R}_a(M)$ are not
transversal \cite{Ber, OH}. This description has also been used in \cite{OH} for the case of Lagrangian surfaces in symplectic  $\mathbb R^{4}$ and,
more recently \cite{GJ}, for the case of general surfaces in $\mathbb R^{4}$.

We now generalize this description for every $\lambda$-equidistant of submanifolds of more arbitrary dimensions.

\begin{defn}\label{reflection}
For $\lambda\in\mathbb{R}\setminus  \{0,1\}$, a  $\lambda$-{\bf reflection} through $a\in  \mathbb R^{q}$ is the map
\begin{equation}\label{reflect}
{\mathcal R}_a^{\lambda}: \mathbb R^{q}\to\mathbb R^{q} \ , \ x\mapsto {\mathcal R}_a^{\lambda}(x)=\frac{1}{\lambda}a-\frac{1-\lambda}{\lambda}x
\end{equation}
\end{defn}
\begin{rem} A  $\lambda$-reflection through $a$ is not a reflection in the strict sense because ${\mathcal R}_a^{\lambda}\circ{\mathcal R}_a^{\lambda}\neq id: \mathbb R^{q}\to\mathbb R^{q}$, instead,
$${\mathcal R}_a^{1-\lambda}\circ{\mathcal R}_a^{\lambda}= id: \mathbb R^{q}\to\mathbb R^{q} \ ,$$
so that, if $a=a_{\lambda}=\lambda a^+ + (1-\lambda)a^-$ is the $\lambda$-point of $(a^+,a^-)\in \mathbb R^{2q}$,
$${\mathcal R}_{a_{\lambda}}^{\lambda}(a^-)=a^+ \ , \  {\mathcal R}_{a_{\lambda}}^{1-\lambda}(a^+)=a^- \ .$$
Of course, for $\lambda=1/2$,  ${\mathcal R}_a^{1/2} \equiv {\mathcal R}_a$ is a reflection in the strict sense.
\end{rem}

Now, let $M$ be a smooth $n$-dimensional submanifold of $\mathbb R^{q}$, with $2n\geq q$, and let $a=a_{\lambda}=\lambda a^+ + (1-\lambda)a^-$ be the $\lambda$-point of $(a^+,a^-)\in M\times M\subset \mathbb R^{q}\times\mathbb R^{q}$. Also,  let
$M^+$ be a germ of submanifold $M$ around $a^+$ and $M^-$
be a germ of submanifold $M$ around $a^-$.  We have:

\begin{prop}
The following statements are equivalent:

\noindent (i) The $\lambda$-point $a$ belongs to $E_{\lambda}(M)$.

\noindent (ii) $M^+$ and ${\mathcal R}_{a}^{\lambda}(M^-)$ are not transversal at $a^+$.

\noindent (iii) $M^-$ and ${\mathcal R}_{a}^{1-\lambda}(M^+)$ are not transversal at $a^-$.
\end{prop}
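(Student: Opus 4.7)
The plan is to reduce the assertion directly to the definition of ``weakly parallel'' by computing the derivative of the $\lambda$-reflection. First, by Definition \ref{parallelism} and the definition of $E_\lambda(M)$, one has $a \in E_\lambda(M)$ iff the pair $(a^+,a^-)$ is weakly parallel, that is,
$$T_{a^+}M + \tau_{a^+-a^-}(T_{a^-}M) \neq \mathbb{R}^q.$$
So it suffices to translate the transversality of $M^+$ and ${\mathcal R}_a^\lambda(M^-)$ at $a^+$ into this condition.

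Next, I would examine ${\mathcal R}_a^\lambda$ itself. From formula (\ref{reflect}) it is an affine map whose linear part is the scalar homothety $-\frac{1-\lambda}{\lambda}\cdot\mathrm{id}_{\mathbb{R}^q}$, which is invertible since $\lambda\in\mathbb{R}\setminus\{0,1\}$. A direct substitution shows ${\mathcal R}_a^\lambda(a^-)=a^+$, hence the germ ${\mathcal R}_a^\lambda(M^-)$ passes through $a^+$. Because any nonzero scalar homothety preserves every linear subspace, the derivative of ${\mathcal R}_a^\lambda$ at $a^-$ carries $T_{a^-}M$ to the \emph{same} subspace of $\mathbb{R}^q$; after parallel transport from $a^-$ to $a^+$ (i.e.\ the canonical identification implicit in Definition \ref{parallelism}), this gives
$$T_{a^+}\bigl({\mathcal R}_a^\lambda(M^-)\bigr) = \tau_{a^+-a^-}(T_{a^-}M).$$
Consequently, $M^+\pitchfork {\mathcal R}_a^\lambda(M^-)$ at $a^+$ is the equation $T_{a^+}M+\tau_{a^+-a^-}(T_{a^-}M)=\mathbb{R}^q$, i.e.\ the negation of weakly parallel, proving the first ``iff''. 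The second ``iff'' then follows from the built-in symmetry $(a^+,a^-,\lambda)\leftrightarrow(a^-,a^+,1-\lambda)$, which fixes the $\lambda$-point $a$ and, via ${\mathcal R}_a^{1-\lambda}\circ {\mathcal R}_a^{\lambda}=\mathrm{id}$, swaps the roles of the two reflections.

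The main obstacle is essentially nonexistent: the statement is a mechanical unwinding of definitions once one observes that a $\lambda$-reflection acts on tangent vectors as a nonzero scalar. The only point requiring genuine care is the canonical identification of tangent spaces at $a^+$ and $a^-$ with subspaces of $\mathbb R^q$, which is precisely what the translation $\tau_{a^+-a^-}$ encodes; making this identification explicit is what turns the transversality condition into the weakly-parallel condition. No global argument or transversality machinery is needed.
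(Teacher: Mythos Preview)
Your argument is correct and is exactly the unwinding of definitions that the paper has in mind: the paper does not give a proof at all, stating only that the characterization is ``immediate'' from the definitions of weakly parallel pair and $\lambda$-reflection. Your computation of the linear part of ${\mathcal R}_a^\lambda$ and the resulting identification of tangent spaces is precisely what makes this immediacy explicit.
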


\begin{rem} Furthermore, from Remark \ref{pilambda} we see that the study of the singularities of affine equidistants is the study of the singularities of $\pi_{\lambda}$.
But this is the same as the study of the singularities at $a=0$ of
$$(x^+,x^-)\to x^+ +\frac{1-\lambda}{\lambda}x^-=x^+-{\mathcal R}_0^{\lambda}(x^-) \ .$$
In other words, {\it the study of the singularities of $E_{\lambda}(M)\ni 0$ can be proceeded via the study of the contact between $M^+$ and ${\mathcal R}_0^{\lambda}(M^-)$ or,
equivalently, the contact between  $M^-$ and ${\mathcal R}_0^{1-\lambda}(M^+)$}.
\end{rem}

\section{${\mathcal K}$-equivalence}

We recall some basic definitions and results (for details,
see \cite{AGV}).

Henceforth, $\mathcal E_s$ denotes the local ring of smooth function-germs on $\mathbb R^s$, and $\mathfrak m_s$  its  maximal ideal.

\begin{defn}\label{Keq}
Map-germs $f, \widetilde{f}:(\mathbb R^s,y_0)\rightarrow (\mathbb
R^t,0)$ are $\mathcal K$-{\bf equivalent}  if there
exists a diffeomorphism-germ $\phi:(\mathbb R^s,y_0)\rightarrow
(\mathbb R^s,y_0)$ and a map-germ $A:(\mathbb R^s,y_0)\rightarrow
GL(\mathbb R^t)$ \ such that \
$
\tilde f=A\cdot (f\circ \phi).
$
\end{defn}

\begin{thm}[\cite{AGV}] \label{V-ideal}
For the $\mathcal K$-equivalence of two map-germs it is necessary
and sufficient that two ideals generated by the components of
these map-germs may be mapped one to the other by an
isomorphism of $\mathcal E_s$ induced by a  diffeomorphism-germ of the source space $(\mathbb R^s,y_0)$.
\end{thm}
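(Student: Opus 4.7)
The plan is to prove both directions separately. For necessity, assume $\tilde f = A \cdot (f \circ \phi)$ with $A : (\mathbb R^s, y_0) \to GL(\mathbb R^t)$ smooth and $\phi$ a source diffeomorphism-germ. The diffeomorphism induces an $\mathbb R$-algebra isomorphism $\phi^* : \mathcal E_s \to \mathcal E_s$, $g \mapsto g \circ \phi$, with $\phi^*\langle f_1,\ldots,f_t \rangle = \langle f_1 \circ \phi, \ldots, f_t \circ \phi \rangle$ tautologically. Since $\det A(y_0) \neq 0$, the matrix $A$ lies in $GL_t(\mathcal E_s)$, so the identities $\tilde f = A(f \circ \phi)$ and $f \circ \phi = A^{-1} \tilde f$ each express one generating set as an $\mathcal E_s$-combination of the other, yielding the ideal equality $\langle \tilde f \rangle = \phi^* \langle f \rangle$.

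For sufficiency, suppose $\phi^* \langle f \rangle = \langle \tilde f \rangle$, so $(f \circ \phi)$ and $\tilde f$ are two $t$-element generating systems of the same ideal $I \subset \mathcal E_s$. It suffices to exhibit an invertible $A \in GL_t(\mathcal E_s)$ with $\tilde f = A(f \circ \phi)$, since this is precisely the data of a $\mathcal K$-equivalence in the sense of Definition \ref{Keq}. I would rely on the following algebraic fact: any two $t$-element generating systems of a finitely generated ideal in a Noetherian local ring $R$ with maximal ideal $\mathfrak m$ differ by an element of $GL_t(R)$. Its proof is a Nakayama argument: let $k = \dim_{R/\mathfrak m}(I/\mathfrak m I)$ be the minimum number of generators. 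By Nakayama, after a permutation, some $k$ of the $t$ generators already form a minimal generating system, while the other $t-k$ are $R$-combinations of these; subtracting these combinations via an invertible row operation reduces each generating tuple to the shape $(v_1, \ldots, v_k, 0, \ldots, 0)$. Performing this reduction on both sides leaves two minimal generating systems of $I$, which are related by a matrix in $GL_k(R)$ (standard Nakayama applied to $I/\mathfrak m I$); padding with an identity block yields the desired $A \in GL_t(R)$.

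The main obstacle is exactly this invertibility assertion. A naive matrix $A$ satisfying $\tilde f = A(f \circ \phi)$ is built from \emph{any} expression of the $\tilde f_i$ as $\mathcal E_s$-combinations of the $f_j \circ \phi$, but such a naive choice may well be singular at $y_0$. The Nakayama reduction exploits the freedom to add to $A$ any matrix whose rows lie in the syzygies of $(f \circ \phi)$, and it is precisely this flexibility that allows invertibility at $y_0$ to be arranged. Once the algebraic lemma is applied to $R = \mathcal E_s$ and the two generating systems of $I = \phi^*\langle f\rangle = \langle \tilde f\rangle$, the equation $\tilde f = A(f \circ \phi)$ with $A(y_0) \in GL(\mathbb R^t)$ follows, and the two map-germs are $\mathcal K$-equivalent.
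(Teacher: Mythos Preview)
The paper does not supply its own proof of this theorem; it is simply quoted from \cite{AGV}. So there is no ``paper's approach'' to compare against, and your argument must stand on its own.

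Your argument is essentially correct, with one caveat worth flagging. You phrase the key algebraic lemma for a \emph{Noetherian} local ring, but $\mathcal E_s$, the ring of smooth function-germs, is famously not Noetherian (for instance, the ideal $\mathfrak m_s^\infty$ of flat germs is not finitely generated). Fortunately, your Nakayama argument never uses the Noetherian hypothesis: you only need that $I$ is finitely generated (which it is, by the $t$ components of either map-germ) and that $R$ is local, so that $I/\mathfrak m I$ is a finite-dimensional $R/\mathfrak m$-vector space and Nakayama applies to lift generators. With ``Noetherian'' deleted, the lemma and its proof go through verbatim for $R=\mathcal E_s$.

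Concretely, the sufficiency step then reads: pick any matrices $A,B$ over $\mathcal E_s$ with $\tilde f = A(f\circ\phi)$ and $f\circ\phi = B\tilde f$; after your row-reduction to minimal generating $k$-tuples padded with zeros, the images of these $k$-tuples in $I/\mathfrak m_s I$ are both bases of a $k$-dimensional real vector space, so the reduced $A$ is invertible modulo $\mathfrak m_s$, hence has unit determinant in $\mathcal E_s$. Padding by an identity block gives $A\in GL_t(\mathcal E_s)$, and $\tilde f = A(f\circ\phi)$ is exactly $\mathcal K$-equivalence. The necessity direction is straightforward as you wrote it.
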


\begin{defn} A map-germ $F:(\mathbb R^s\times \mathbb
R^p,(y_0,z_0))\rightarrow \mathbb R^t$ is a {\bf deformation} of a
map-germ $f:(\mathbb R^s,y_0)\rightarrow \mathbb R^t$ if
$F|_{\mathbb R^s\times \{z_0\}}=f$, where
$p$ is the number of parameters of deformation $F$.
\end{defn}

\begin{defn} A diffeomorphism-germ $\Phi:(\mathbb R^s\times \mathbb
R^p,(y_0,z_0))\rightarrow (\mathbb R^s\times \mathbb
R^p,(y_0,z_0))$ is called {\bf fiber-preserving} if
$\Phi(y,z)=(Y(y,z),Z(z))$ for a smooth map-germ $Y:(\mathbb
R^s\times \mathbb R^p,(y_0,z_0))\rightarrow (\mathbb R^s,y_0)$ and
a diffeomorphism-germ $Z:( \mathbb R^p,z_0)\rightarrow (\mathbb
R^p,z_0)$. It means that $\Phi$ preserves the fibers of the
projection $pr :(\mathbb R^s\times \mathbb
R^p,(y_0,z_0))\rightarrow (\mathbb R^p,z_0)$.
\end{defn}

\begin{defn}
Deformations $F, \widetilde{F}:(\mathbb R^s\times \mathbb
R^p,(y_0,z_0))\rightarrow (\mathbb R^t,0)$ of respective map-germs $f,
\widetilde{f}:(\mathbb R^s,y_0)\rightarrow (\mathbb R^t,0)$
  are {\bf fiber $\mathcal K$-equivalent} if there
is a fiber-preserving diffeomorphism-germ $\Phi:(\mathbb
R^s\times \mathbb R^p,(y_0,z_0))\rightarrow (\mathbb R^s\times
\mathbb R^p,(y_0,z_0))$, i.e. $\Phi(y,z)=(Y(y,z),Z(z))$, and a
map-germ $\mathbb A:(\mathbb R^s\times \mathbb R^p,(y_0,z_0))\rightarrow
GL(\mathbb R^t)$ \ such that \
$
\tilde F=\mathbb A\cdot (F\circ \Phi).
$
\end{defn}

\begin{cor}\label{V-def-ideal}
For the fiber $\mathcal K$-equivalence of two deformations it is
necessary and sufficient that the two ideals of $\mathcal E_{s+p}$ generated by the
components of these deformations may be mapped one to the
other by an isomorphism of $\mathcal E_{s+p}$ induced by a fiber-preserving diffeomorphism-germ
of the source space $(\mathbb R^s\times \mathbb R^p,(y_0,z_0))$.
\end{cor}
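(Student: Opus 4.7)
The plan is to reduce Corollary \ref{V-def-ideal} directly to Theorem \ref{V-ideal} by running the same argument with the fiber-preserving hypothesis carried along for free. The crucial observation is that the invertible matrix part $\mathbb A$ of a (fiber) $\mathcal K$-equivalence is unconstrained by the fiber structure; the only novelty of the corollary over the theorem is the restriction on the source diffeomorphism, which is placed on $\Phi$ by hypothesis and preserved through the argument.

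For the necessity direction, suppose $F$ and $\widetilde F$ are fiber $\mathcal K$-equivalent via a fiber-preserving $\Phi(y,z)=(Y(y,z),Z(z))$ and a map-germ $\mathbb A:(\mathbb R^s\times\mathbb R^p,(y_0,z_0))\to GL(\mathbb R^t)$. Writing $\widetilde F_i=\sum_j \mathbb A_{ij}(F_j\circ\Phi)$ shows $\langle \widetilde F_1,\ldots,\widetilde F_t\rangle\subset \langle F_1\circ\Phi,\ldots,F_t\circ\Phi\rangle$ as ideals in $\mathcal E_{s+p}$, and the reverse inclusion follows on multiplying by $\mathbb A^{-1}$. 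Thus the algebra automorphism $\Phi^*$ of $\mathcal E_{s+p}$, which is by construction induced by a fiber-preserving diffeomorphism-germ, sends the ideal generated by the components of $F$ onto that generated by the components of $\widetilde F$.

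For the sufficiency direction, suppose one is given a fiber-preserving diffeomorphism-germ $\Phi$ such that $\Phi^*\langle F_1,\ldots,F_t\rangle = \langle \widetilde F_1,\ldots,\widetilde F_t\rangle$. Viewing $F$ and $\widetilde F$ simply as map-germs $(\mathbb R^{s+p},(y_0,z_0))\to(\mathbb R^t,0)$ places the pair in the situation of Theorem \ref{V-ideal}, with the source diffeomorphism pre-specified to be $\Phi$. Expressing each $\widetilde F_i$ as an $\mathcal E_{s+p}$-linear combination $\widetilde F_i=\sum_j \alpha_{ij}(F_j\circ\Phi)$, and likewise each $F_j\circ\Phi$ as a combination of the $\widetilde F_k$, the standard local-ring argument of Theorem \ref{V-ideal} produces a matrix-germ $\mathbb A:(\mathbb R^{s+p},(y_0,z_0))\to GL(\mathbb R^t)$ with $\widetilde F=\mathbb A\cdot(F\circ\Phi)$. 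Since the definition of fiber $\mathcal K$-equivalence imposes no additional structure on $\mathbb A$, the pair $(\Phi,\mathbb A)$ is a fiber $\mathcal K$-equivalence.

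The main concern is the implicit use of Theorem \ref{V-ideal} with a \emph{pre-specified} source diffeomorphism, rather than one produced by the theorem itself. Inspecting that statement, however, the isomorphism between the ideals \emph{is} the pullback of an unspecified diffeomorphism-germ, so starting from the given $\Phi$ one simply invokes the same determinant/Nakayama argument, in the local ring $\mathcal E_{s+p}$, to extract a matrix $\mathbb A$ with $\mathbb A(y_0,z_0)\in GL(\mathbb R^t)$ without disturbing $\Phi$. Because $\Phi$ enters this construction only through $\Phi^*$, the fiber-preserving character survives automatically, and no further analysis in the parameter direction $z$ is required.
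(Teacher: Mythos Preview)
Your argument is correct and is exactly what the paper intends: the corollary is stated without proof because it follows from Theorem \ref{V-ideal} verbatim once one notes that the matrix factor $\mathbb A$ carries no fiber constraint, so the only change is to require the source diffeomorphism to be fiber-preserving. You have simply spelled out this immediate reduction, including the one nontrivial point---that the invertible matrix produced in the sufficiency direction of Theorem \ref{V-ideal} is built from the \emph{given} diffeomorphism and hence inherits no obstruction from the fiber structure.
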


\begin{defn}\label{Ksimp}
The germ $f:(\mathbb R^s,0)\rightarrow (\mathbb R^t,0)$ is said to be
{\bf $\mathcal K$-simple} if its $k$-jet, for any $k$, has a
neighborhood in the  jet space $J^k_{0,0}(\mathbb R^s,
\mathbb R^t)$ that intersects only a finite number of $\mathcal
K$-equivalence classes (bounded by a constant independent of $k$).
\end{defn}

\begin{defn}
The $p$-parameter {\bf suspension} of the map-germ $f:(\mathbb
R^s,0)\rightarrow (\mathbb R^t,0)$ is the map germ
$$
F:(\mathbb R^s\times \mathbb R^p,0)\ni (y,z)\mapsto (f(y),z)\in
(\mathbb R^t\times \mathbb R^p,0).
$$
\end{defn}

\begin{thm}[\cite{Go2}]\label{k-simple}
$\mathcal K$-simple map-germs $(\mathbb R^s,0)\rightarrow (\mathbb
R^t,0)$ with $s\ge t$ belong, up to $\mathcal K$-equivalence and
suspension, to one of the following three lists in Tables
\ref{ADE}-\ref{Giusti}:

\begin{center}
\begin{table}[h]
    \begin{small}
    \noindent
    \begin{tabular}{|c|c|c|}
            \hline
    Notation &  Normal form & Restrictions  \\ \hline

     $A_{\mu}$ & $y_1^{\mu+1}+Q_{s-1}$ & $\mu\ge 1$ \\ \hline
     $D_{\mu}$ & $y_1^2y_2\pm y_2^{\mu-1}+Q_{s-2}$ & $\mu\ge 4$ \\ \hline
     $E_6$ & $y_1^3+y_2^4+Q_{s-2}$ & -\\ \hline
     $E_7$ & $y_1^3+y_1y_2^3+Q_{s-2}$ &  -\\ \hline
     $E_8$ & $y_1^3+y_2^5+Q_{s-2}$ &  -\\ \hline
\end{tabular}
\smallskip
\caption{\small $\mathcal K$-simple germs $\mathbb
R^s\rightarrow \mathbb R$.   $Q_{s-i}=\pm y_{i+1}^2\pm \cdots \pm
y_s^2$.}\label{ADE}
\end{small}
\end{table}
\begin{table}[h]
    \begin{small}
    \noindent
    \begin{tabular}{|c|c|c|}
            \hline
    Notation &  Normal form & Restrictions  \\ \hline

     $C^{\pm}_{k,l}$ & $(y_1y_2, y_1^k\pm y_2^l$) & $l\ge k\ge 2$ \\ \hline

     $\widetilde{C}_{2k}$ & $(y_1^2+y_2^2, y_2^k$) & $k\ge 3$ \\ \hline

$F_{2m+1}$ & $(y_1^2+y_2^3, y_2^m$) & $m\ge 3$ \\ \hline

$F_{2m+4}$ &$(y_1^2+y_2^3, y_1y_2^m)$ & $m\ge 2$ \\ \hline

$G^*_{10}$ & $(y_1^2, y_2^4$) & - \\ \hline

$H^{\pm}_{m+5}$ & $(y_1^2\pm y_2^m, y_1y_2^2)$ & $m\ge 4$ \\
\hline
\end{tabular}
\smallskip
\caption{\small $\mathcal K$-simple germs $\mathbb
R^2\rightarrow \mathbb R^2$. }\label{I}
\end{small}
\end{table}
\begin{table}[h]
    \begin{small}
    \noindent
    \begin{tabular}{|c|c|c|}
            \hline
    Notation &  Normal form & Restrictions  \\ \hline
$S_{\mu}$ & $(\pm y_1^2 \pm y_2^2+y_3^{\mu-3}, y_2y_3$) & $\mu\ge 5$ \\
\hline

$T_{7}$ & $(y_1^2+y_2^3+y_3^3, y_2y_3$) & - \\ \hline

$\widetilde{T}_7$ & $(y_1^2+y_2^2, y_2^2+y_3^2)$ &  - \\ \hline

$T_{8}$ & $(y_1^2+y_2^3\pm y_3^4, y_2y_3$) & - \\ \hline

$T_{9}$ & $(y_1^2+y_2^3+y_3^5, y_2y_3$) & - \\ \hline

$U_{7}$ & $(y_1^2+y_2y_3, y_1y_2+y_3^3$) & - \\ \hline

$U_{8}$ & $(y_1^2+y_2y_3+y_3^3, y_1y_2$) & - \\ \hline

$U_{9}$ & $(y_1^2+y_2y_3, y_1y_2+y_3^4$) & - \\ \hline

$W_{8}$ & $(y_1^2+y_2^3, y_2^2+y_1y_3$) & - \\ \hline

$W_{9}$ & $(y_1^2+y_2y_3^2, y_2^2+y_1y_3$) & - \\ \hline

$Z_{9}$ & $(y_1^2+y_3^3, y_2^2+y_3^3$) & - \\ \hline

$Z_{10}$ & $(y_1^2+y_2y_3^2, y_2^2+y_3^3$) & - \\ \hline
\end{tabular}
\smallskip
\caption{\small $\mathcal K$-simple germs $\mathbb
R^3\rightarrow \mathbb R^2$. }\label{Giusti}
\end{small}
\end{table}
\end{center}
\end{thm}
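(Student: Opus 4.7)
The plan is to invoke Theorem \ref{V-ideal} which reduces $\mathcal{K}$-equivalence to equivalence of the ideals $I_f = \langle f_1,\ldots,f_t\rangle \subset \mathcal{E}_s$ under diffeomorphism-germs of the source. The fundamental invariant is then the local algebra $Q_f = \mathcal{E}_s/I_f$, together with data measuring how $f^*\mathfrak{m}_t$ sits inside $\mathfrak{m}_s$. I would first use a splitting-type reduction: if the differential $df_0$ has rank $\rho$, then after suspension one may assume $f\in \mathfrak{m}_s^2 \cdot \mathcal{E}_s^t$ and replace $f$ by its essentially reduced core of corank $s-\rho$ in the source and $t-\rho'$ in the target. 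This accounts for the ``up to suspension'' clause and cuts each classification problem down to a small source dimension.

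Next I would split according to target dimension $t$. For $t=1$ (functions), Corollary \ref{V-def-ideal} specialized to principal ideals shows that $\mathcal{K}$-equivalence coincides with contact equivalence of the hypersurface germs $\{f=0\}$, so Arnold's classical analysis applies: a Mather finite-determinacy estimate bounds the relevant jet, the Morse part $Q_{s-c}$ is split off by the splitting lemma, and a jet-by-jet reduction modulo the extended tangent space $T\mathcal{K}f = \mathfrak{m}_s\,\mathrm{Jac}(f) + \langle f\rangle$ produces the $A_\mu,D_\mu,E_6,E_7,E_8$ list of Table \ref{ADE}. Simplicity forces corank at most $2$, because corank $3$ already realizes a unimodal germ (the four-line $X_9$).

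For $t=2$ I would treat $s=2$ and $s=3$ separately, following Giusti's and Goryunov's strategy. In each case one estimates finite $\mathcal{K}$-determinacy through the extended tangent space $T\mathcal{K}f = tf(\mathfrak{m}_s\theta_s) + f^*\mathfrak{m}_t\cdot \mathcal{E}_s^t$, and then parametrizes the low-order jets modulo this tangent space. For $s=t=2$ one places the $2$-jet into one of a short list of pre-normal forms (up to $GL(\mathbb{R}^2)$ acting on the target, these are $(y_1y_2,0)$, $(y_1^2\pm y_2^2,0)$, $(y_1^2,0)$, or $0$), and resolves higher-order terms ideal-theoretically, producing the $C^\pm_{k,l}$, $\widetilde C_{2k}$, $F_{2m+1}$, $F_{2m+4}$, $G^*_{10}$, $H^\pm_{m+5}$ families in Table \ref{I}. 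For $s=3,t=2$ the same method applied to $J^k(3,2)$, starting from the pencil spanned by the components of the $2$-jet and its Kronecker normal forms, yields the $S_\mu$, $T_7$, $\widetilde T_7$, $T_8$, $T_9$, $U_\bullet$, $W_\bullet$, $Z_\bullet$ entries of Table \ref{Giusti}.

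The main obstacle, in every case, is not verifying that each listed germ is $\mathcal{K}$-simple (that follows from a finite-determinacy check and direct computation of its tangent space), but rather \emph{exhausting} the simple orbits by locating explicit moduli in every complementary stratum. Concretely, one must exhibit, in each jet stratum not covered by the lists, a one-parameter family of jets with pairwise non-isomorphic local algebras $Q_f$, or equivalently a non-trivial cross-ratio type invariant under source diffeomorphism and target $GL(\mathbb{R}^t)$; this violates the finiteness condition of Definition \ref{Ksimp}. Once these obstructing moduli are exhibited at the critical low orders (as for $X_9$ in the function case, for pencils with four coincident roots in the $(3,2)$ case, etc.), a finite and explicit enumeration of the surviving finitely-determined jets completes the classification in Tables \ref{ADE}--\ref{Giusti}.
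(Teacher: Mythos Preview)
The paper does not prove this theorem at all: it is quoted from \cite{Go2} and stated without argument, so there is no ``paper's own proof'' to compare your proposal to. What can be said is that your outline follows exactly the classical route taken in the sources the paper cites---Arnold's splitting lemma and jet-by-jet reduction for Table~\ref{ADE}, and the Giusti/Goryunov analysis of low-dimensional rank-$0$ germs via $2$-jet normal forms and complete transversals for Tables~\ref{I} and~\ref{Giusti}. In that sense your approach is the standard one and is not ``different'' from anything in the paper; it is simply an attempt to supply what the paper deliberately omits.

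As a proof, however, your proposal is a roadmap rather than an argument. The genuinely hard steps are only named, not carried out: you do not actually exhibit the moduli that bound simplicity (e.g.\ the cross-ratio in the $(3,2)$ pencils, or why corank $\ge 3$ in the target-$2$ cases forces non-simple behaviour), you do not verify finite determinacy for any of the listed germs, and you do not show that the jet-stratum enumeration is exhaustive. Each of these is a substantial computation in \cite{Go2}; without them the proposal remains a correct strategy but not a proof. If your intent is to record why the theorem is plausible and where its proof lives, the sketch is adequate; if the intent is to replace the citation by a self-contained argument, the exhaustion and moduli steps must be written out in full.
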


\begin{defn}
A deformation $F:(\mathbb R^s\times \mathbb R^p, (0,0))\rightarrow
(\mathbb R^t,0)$ of a map-germ $f:(\mathbb R^s,0) \rightarrow
(\mathbb R^t,0)$ is {\bf $\mathcal K$-versal} if any other
deformation $\widetilde F:(\mathbb R^s\times \mathbb R^q,(0,0))\rightarrow
(\mathbb R^t,0)$ of $f$ is of the form
$$\widetilde F(y,z)=\mathbb A(y,z)\cdot F(g(y,z),h(z)),$$
 where $\mathbb A:\mathbb R^s\times
\mathbb R^q\rightarrow GL(\mathbb R^t)$, $g:(\mathbb R^s\times
\mathbb R^q,(0,0))\rightarrow (\mathbb R^s,0)$, $h:(\mathbb R^q,0)
\rightarrow (\mathbb R^p,0)$ are map-germs such that $\mathbb A(0,0)$ is
nondegerate matrix and $g(y,0)=y$.
\end{defn}

\begin{thm}[\cite{AGV}]
$\mathcal K$-versal deformations of $\mathcal K$-equivalent germs
with the same number of parameters are fiber $\mathcal
K$-equivalent.
\end{thm}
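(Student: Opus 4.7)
The plan is to prove the theorem in two stages: first, reduce to the case where the two germs coincide; second, show that any two $\mathcal K$-versal deformations of a fixed germ with the same number of parameters are fiber $\mathcal K$-equivalent.

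\emph{Reduction.} Let the $\mathcal K$-equivalence of $f$ and $\tilde f$ be realized by a diffeomorphism $\phi$ and a map $A$ to $GL(\mathbb R^t)$ with $\tilde f=A\cdot(f\circ\phi)$ as in Definition \ref{Keq}, and let $F$ be a $p$-parameter $\mathcal K$-versal deformation of $f$. Set
$$F_1(y,z) := A(y)\cdot F(\phi(y),z),$$
a $p$-parameter deformation of $\tilde f$. The fiber-preserving diffeomorphism $(y,z)\mapsto(\phi(y),z)$ together with the matrix multiplier $\mathbb A(y,z):=A(y)$ realizes a fiber $\mathcal K$-equivalence between $F$ and $F_1$, and a direct check based on the infinitesimal criterion for $\mathcal K$-versality (which sees only the extended $\mathcal K$-orbit of the reduced germ and the span of the $z$-derivatives at $z=0$) shows that $F_1$ itself is $\mathcal K$-versal. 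By transitivity of fiber $\mathcal K$-equivalence, it suffices to compare $F_1$ and $\tilde F$, two $\mathcal K$-versal $p$-parameter deformations of the single germ $\tilde f$.

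\emph{Same-germ step.} Relabel so that $F$ and $\tilde F$ are both $\mathcal K$-versal $p$-parameter deformations of one germ $f$. Applying the defining property of $\mathcal K$-versality of $F$ to $\tilde F$ yields
$$\tilde F(y,z) = \mathbb A(y,z)\cdot F(g(y,z),h(z)),$$
with $\mathbb A(0,0)$ invertible, $g(y,0)=y$, and $h:(\mathbb R^p,0)\to(\mathbb R^p,0)$. The map $\Phi(y,z):=(g(y,z),h(z))$ is fiber-preserving by construction, and its differential at the origin is block lower-triangular with diagonal blocks $\mathrm{id}$ (from $g(y,0)=y$) and $dh(0)$. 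Hence the displayed identity is already a fiber $\mathcal K$-equivalence $\tilde F=\mathbb A\cdot(F\circ\Phi)$, provided $dh(0)$ is invertible; this is the main obstacle.

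\emph{Invertibility of $dh(0)$.} Apply $\mathcal K$-versality of $\tilde F$ in the reverse direction to write $F(y,z)=\mathbb B(y,z)\cdot\tilde F(\tilde g(y,z),\tilde h(z))$, and compose with the previous identity to arrive at
$$F(y,z) = \mathbb C(y,z)\cdot F(G(y,z),(h\circ\tilde h)(z)),$$
with $G(y,0)=y$ and $\mathbb C(0,0)$ invertible. Differentiating in $z$ at $z=0$ and reducing modulo the extended tangent space to the $\mathcal K$-orbit through $f$, the infinitesimal criterion for $\mathcal K$-versality of $F$ --- namely that the classes $[\partial F/\partial z_j|_{z=0}]$ generate the normal space --- combined with the symmetric identity obtained by iterating in the opposite order, forces $dh(0)\circ d\tilde h(0)$ to be an automorphism of $\mathbb R^p$. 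Here the equality of parameter counts is essential, since otherwise one would only extract a partial isomorphism rather than a bijective endomorphism of the full $p$-dimensional parameter space. In particular $dh(0)$ is invertible, $\Phi$ is a fiber-preserving diffeomorphism-germ, and $\tilde F=\mathbb A\cdot(F\circ\Phi)$ is the desired fiber $\mathcal K$-equivalence.
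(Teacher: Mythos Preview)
The paper does not prove this theorem; it is quoted from \cite{AGV} and stated without argument, so there is no proof in the paper to compare against. I therefore comment on your proposal on its own merits.

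Your reduction step and overall strategy are standard and fine, and your argument is correct in the miniversal case $p=\dim_{\mathbb R}\bigl(\theta(f)/T\mathcal K_e f\bigr)$. The gap is in the step ``Invertibility of $dh(0)$'' when $p$ exceeds the $\mathcal K_e$-codimension $d$ of $f$. Differentiating the composed identity $F(y,z)=\mathbb C(y,z)\cdot F(G(y,z),(h\circ\tilde h)(z))$ at $z=0$ and reducing modulo $T\mathcal K_e f$ yields only a relation among the classes $[\partial F/\partial z_j|_{z=0}]$ in the normal space $N_f$. Versality says these classes \emph{span} $N_f$; when $p>d$ they are linearly dependent, and the relation does not pin down $d(h\circ\tilde h)(0)$ as a $p\times p$ matrix, let alone force it to be invertible. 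A concrete counterexample: take $f(y)=y^2$, $p=2$, $F(y,z)=y^2+z_1$, $\tilde F(y,z)=y^2+z_2$. Both are $\mathcal K$-versal. The choices $h(z)=(z_2,0)$ and $\tilde h(z)=(0,z_1)$ are perfectly admissible in the factorizations supplied by versality, yet
\[
dh(0)\circ d\tilde h(0)=\begin{pmatrix}0&1\\0&0\end{pmatrix}\begin{pmatrix}0&0\\1&0\end{pmatrix}=\begin{pmatrix}1&0\\0&0\end{pmatrix}
\]
is singular. So the particular $h$ produced by the definition of versality need not be a diffeomorphism, and your composition argument cannot repair this.

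The fix is not to show that \emph{every} such $h$ is invertible, but to use the freedom in choosing it. One route (essentially the argument in \cite{AGV}): after a linear change of parameters, arrange that $[\partial F/\partial z_1|_0],\dots,[\partial F/\partial z_d|_0]$ form a basis of $N_f$, so that $F_0(y,z_1,\dots,z_d):=F(y,z_1,\dots,z_d,0,\dots,0)$ is miniversal. Apply versality of $F_0$ to $F$ to obtain $F(y,z)=\mathbb A(y,z)\cdot F_0(g(y,z),h(z))$ with $h:(\mathbb R^p,0)\to(\mathbb R^d,0)$; now the same differentiation argument you wrote \emph{does} work (the target has dimension $d$ and the classes are independent), giving that $dh(0)$ is surjective. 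Complete the submersion $h$ to a diffeomorphism of $(\mathbb R^p,0)$ to conclude that $F$ is fiber $\mathcal K$-equivalent to $F_0$ composed with the projection $\mathbb R^p\to\mathbb R^d$. Doing the same for $\tilde F$ and using your reduction step finishes the proof.
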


\section{Singularities of projection and of contact}

\subsection{Singularities of projection}

In view of Theorem \ref{gensing}, let $M$ and $\widetilde{M}$ be smooth closed $n$-dimensional
submanifolds of $\mathbb R^q$, $q\leq 2n$,  and
$$\mathcal
M_{\lambda}=\Gamma_{\lambda}(M\times M) \ , \ \widetilde{\mathcal
M}_{\lambda}=\Gamma_{\lambda}(\widetilde{M}\times \widetilde{M}) \ ,$$
where $\Gamma_{\lambda}$ is the  $\lambda$-chord
transformation.

For local classification of singularities, we introduce the definition:

\begin{defn}\label{lambdachordequivalence} $E_{\lambda}(M)$ and
$E_{\lambda}(\widetilde{M})$ are
{\bf $\lambda$-chord equivalent} if there
exists a fiber-preserving diffeomorphism-germ of $T\mathbb R^q$
that maps the germ of $\mathcal M_{\lambda}$ to
the germ of $\widetilde{\mathcal M}_{\lambda} \ $
i.e. if the following diagram commutes (vertical arrows
indicate diffeomorphism-germs):
$$
\begin{array}{ccccc}
& \Gamma_{\lambda}|_{M\times M} & & \pi & \\
M\times M&\longrightarrow & T\mathbb R^q & \longrightarrow &
\mathbb R^q \\
 & & & &  \\
 \downarrow &  & \downarrow &  & \downarrow  \\
 & \Gamma_{\lambda}|_{\widetilde M\times \widetilde M} & & \pi &  \\
 \widetilde M\times \widetilde M&\longrightarrow &  T\mathbb R^q &
\longrightarrow &  \mathbb
 R^q \\
\end{array}
$$
\end{defn}

The
${\lambda}$-chord equivalence of $E_{\lambda}$ is a special
case of equivalence of projections studied by V. Goryunov
(\cite{Go1}, \cite{Go2}), as outlined below.

\begin{defn}
A {\bf projection} of a (smooth) submanifold $S$ from a
total space $E$ to the base $B$ of the bundle $p:E\rightarrow B$
is a triple
$$
\begin{array}{ccccc}
& \iota & & p & \\
S&\hookrightarrow &E& \rightarrow &B \\
\end{array}
$$
where $\iota$ is an embedding. A projection is called a {\bf
projection ``onto''} if the dimension of $S$ is not less than the
dimension of the base $B$.
\end{defn}

\begin{defn}
Two projections $S_i\hookrightarrow E_i \rightarrow B_i$ for
$i=1,2$ are {\bf equivalent} if the following diagram commutes
$$
\begin{array}{ccccc}
& \iota_1 & & p_1 & \\
S_1&\hookrightarrow &E_1& \rightarrow &B_1 \\
\downarrow& \iota_2 &\downarrow & p_2 & \downarrow\\
S_2&\hookrightarrow &E_2& \rightarrow &B_2 \\
\end{array}
$$
where vertical arrows indicate diffeomorphisms.
\end{defn}

A projection of $S$ onto $B$ defines a family of subvarieties in
the fibers of the bundle $p:E\rightarrow B$ parameterized by $B$:
$S_b=S\cap p^{-1}(b)$ for any $b\in B$. A germ of projection
$(S,q_0)\hookrightarrow (E,e_0) \rightarrow (B,b_0)$ can be
considered in a natural way as a deformation of the subvariety
$S_{b_0}$.

The germ of a bundle $E\rightarrow B$ can be identified with the
germ of the trivial bundle $\mathbb R^s\times \mathbb
R^p\rightarrow \mathbb R^p$. A germ of an embedded  smooth
submanifold $S$ can be described by the germ of the variety of
zeros of some mapping-germ $F:(\mathbb R^s\times \mathbb
R^p,(y_0,z_0))\rightarrow \mathbb R^t$. Then $S_{z_0}$ can be
identified with the germ of the variety of zeros of $F|_{\mathbb
R^s\times \{z_0\}}$.

If deformations $F, \widetilde{F}:(\mathbb R^s\times \mathbb
R^p,(y_0,z_0))\rightarrow (\mathbb R^t,0)$ of map-germs $f,
\widetilde{f}:(\mathbb R^s,y_0)\rightarrow (\mathbb R^t,0)$
(respectively)  are fiber $\mathcal K$-equivalent then the
following diagram commutes ($\Phi$, $Z$ indicate
diffeomorphism-germs and $pr$ indicate the projection):
$$
\begin{array}{ccccc}
&  & & pr & \\
F^{-1}(0)&\hookrightarrow & \mathbb R^s\times\mathbb R^p &
\longrightarrow &
\mathbb R^p \\
& & & &  \\
\downarrow &  & \downarrow \Phi&  & \downarrow  Z \\
& & & pr &  \\
\widetilde{F}^{-1}(0)&\hookrightarrow & \mathbb R^s\times\mathbb
R^p & \longrightarrow &
\mathbb R^p\\
\end{array}
$$
If the ideal of function-germs vanishing on $F^{-1}(0)$ is generated
by the components of $F$, then by Corollary \ref{V-def-ideal} the
inverse result is also true.

We remind that the group $\mathcal{A} =\mbox{Diff}(\mathbb R^m,0)\times \mbox{Diff}(\mathbb R^p,0)$ acts on map-germs $(\mathbb R^m,0)\to(\mathbb R^p,0)$ by composition on source and target,  with corresponding definitions for $\mathcal A$-equivalent and $\mathcal A$-simple (refer  to Definitions \ref{Keq} and \ref{Ksimp} for the group $\mathcal K$). Then,
from the above we have the following results:

\begin{prop}[\cite{Go1,Go2}]$F$ and $\widetilde F$ are fiber $\mathcal K$-equivalent if and only if
the projections of $F^{-1}(0)$ and
$\widetilde{F}^{-1}(0)$ onto $\mathbb R^p$ are $\mathcal A$-equivalent.\end{prop}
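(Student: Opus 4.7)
My plan is to derive both implications from Corollary \ref{V-def-ideal}, which converts fiber $\mathcal K$-equivalence of two deformations into the existence of a fiber-preserving diffeomorphism-germ whose induced automorphism of $\mathcal E_{s+p}$ matches up the ideals generated by their components. The bridge to projections is the observation that $\mathcal A$-equivalence of the two projections $F^{-1}(0)\hookrightarrow \mathbb R^s\times\mathbb R^p\to\mathbb R^p$ and $\widetilde F^{-1}(0)\hookrightarrow \mathbb R^s\times\mathbb R^p\to\mathbb R^p$ is nothing but the existence of such a fiber-preserving diffeomorphism-germ sending one zero-set to the other.

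For the forward implication, I would unpack the fiber $\mathcal K$-equivalence as $\widetilde F=\mathbb A\cdot(F\circ\Phi)$ with $\Phi(y,z)=(Y(y,z),Z(z))$ fiber-preserving and $\mathbb A(y,z)\in GL(\mathbb R^t)$. Invertibility of $\mathbb A$ gives $\widetilde F^{-1}(0)=\Phi^{-1}(F^{-1}(0))$, so $\Phi$ carries the germ of $\widetilde F^{-1}(0)$ onto the germ of $F^{-1}(0)$; combined with $pr\circ\Phi=Z\circ pr$, this is exactly the commutative diagram defining $\mathcal A$-equivalence of the two projections.

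For the converse, I would start from an $\mathcal A$-equivalence, i.e.\ a pair $(\Phi,Z)$ of diffeomorphism-germs with $\Phi(\widetilde F^{-1}(0))=F^{-1}(0)$ and $pr\circ\Phi=Z\circ pr$. The second identity forces $\Phi(y,z)=(Y(y,z),Z(z))$, so $\Phi$ is already fiber-preserving. The pullback $\Phi^{*}$ then sends the ideal of function-germs vanishing on $F^{-1}(0)$ onto the ideal vanishing on $\widetilde F^{-1}(0)$. Invoking the hypothesis highlighted in the paragraph preceding the statement --- namely, that these vanishing ideals are generated by the components of $F$ and $\widetilde F$ --- one obtains a fiber-preserving isomorphism of $\mathcal E_{s+p}$ matching up $\langle F_1,\dots,F_t\rangle$ with $\langle \widetilde F_1,\dots,\widetilde F_t\rangle$, and Corollary \ref{V-def-ideal} then delivers the fiber $\mathcal K$-equivalence.

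The main obstacle will be precisely this identification between the vanishing ideal of $F^{-1}(0)$ and the ideal $\langle F_1,\dots,F_t\rangle$ generated by the components: in the smooth ($C^{\infty}$) category there is no Nullstellensatz in general, and without the stated hypothesis a $\Phi$ that preserves zero-sets need not preserve the component ideals. Granting this hypothesis, however, both directions reduce to clean diagram chases layered on top of Corollary \ref{V-def-ideal}.
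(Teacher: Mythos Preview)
Your proposal is correct and matches the paper's own argument, which is not given as a formal proof block but is contained in the two paragraphs immediately preceding the proposition: the forward implication is the commutative diagram obtained directly from the definition of fiber $\mathcal K$-equivalence, and the converse is obtained from Corollary~\ref{V-def-ideal} under the hypothesis that the vanishing ideal of $F^{-1}(0)$ is generated by the components of $F$. You have also correctly flagged that this last hypothesis is essential for the converse and is not part of the proposition's statement but is supplied by the surrounding text.
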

\begin{thm}[\cite{Go1}]\label{simple-simple}
If the germ of a projection $(F^{-1}(0),(0,0))\hookrightarrow
(\mathbb R^s\times \mathbb R^p,(0,0) )\rightarrow (\mathbb R^p,0)$
is $\mathcal A$-simple then $f=F|_{\mathbb R^s\times \{0\}}$ is $\mathcal
K$-simple.
\end{thm}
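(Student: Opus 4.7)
The plan is to argue by contrapositive: assume that $f=F|_{\mathbb R^s\times\{0\}}$ is not $\mathcal K$-simple, and show that the projection $(F^{-1}(0),(0,0))\hookrightarrow(\mathbb R^s\times\mathbb R^p,(0,0))\to(\mathbb R^p,0)$ is not $\mathcal A$-simple. By the preceding proposition of Goryunov, this is equivalent to showing that $F$ is not fiber $\mathcal K$-simple in the sense that every jet-neighborhood of $j^k F$ meets infinitely many fiber $\mathcal K$-equivalence classes of deformations.

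The key construction is a lifting map from perturbations of $f$ to deformations of nearby map-germs. Given any germ $f':(\mathbb R^s,0)\to(\mathbb R^t,0)$ close to $f$ in the $k$-jet topology, define the deformation
\[
F'(y,z):=F(y,z)+f'(y)-f(y).
\]
Then $F'|_{\mathbb R^s\times\{0\}}=f'$, so $F'$ is a $p$-parameter deformation of $f'$, and clearly $j^k F'$ is close to $j^k F$ in $J^k_{0,0}(\mathbb R^{s+p},\mathbb R^t)$ when $j^k f'$ is close to $j^k f$. I would next show that this lifting is injective on $\mathcal K$-classes, in the following sense: if two lifted deformations $F'_1,F'_2$ are fiber $\mathcal K$-equivalent, then the underlying germs $f'_1,f'_2$ are $\mathcal K$-equivalent. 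Indeed, suppose $F'_2=\mathbb A\cdot(F'_1\circ\Phi)$ for a fiber-preserving $\Phi(y,z)=(Y(y,z),Z(z))$ with $Z(0)=0$ and $\mathbb A:(\mathbb R^{s+p},0)\to GL(\mathbb R^t)$. Restricting to $z=0$ yields
\[
f'_2(y)=\mathbb A(y,0)\cdot f'_1\bigl(Y(y,0)\bigr),
\]
and the invertibility of $d\Phi(0,0)$ forces $y\mapsto Y(y,0)$ to be a diffeomorphism-germ of $(\mathbb R^s,0)$, while $\mathbb A(\cdot,0)$ is a germ into $GL(\mathbb R^t)$. Thus $f'_1$ and $f'_2$ are $\mathcal K$-equivalent.

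Putting this together, assume $f$ is not $\mathcal K$-simple, so there exists $k$ such that every neighborhood of $j^k f$ in $J^k_{0,0}(\mathbb R^s,\mathbb R^t)$ meets infinitely many $\mathcal K$-classes. Picking representatives $\{f'_\alpha\}$ from these classes and lifting to $\{F'_\alpha\}$ via the construction above, the injectivity statement just proved says the $F'_\alpha$ represent pairwise distinct fiber $\mathcal K$-equivalence classes of deformations. By continuity of the lifting, the jets $j^k F'_\alpha$ accumulate near $j^k F$, so every neighborhood of $j^k F$ contains infinitely many fiber $\mathcal K$-classes. Applying the proposition of Goryunov, these translate into infinitely many $\mathcal A$-classes of projections in any neighborhood of the $k$-jet of the given projection, contradicting its $\mathcal A$-simplicity.

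The main obstacle I expect is verifying cleanly that the lifting $f'\mapsto F'$ behaves continuously at the $k$-jet level and that fiber $\mathcal K$-inequivalence of the lifts really does force $\mathcal A$-inequivalence of the projections (which uses only the easy direction of the Goryunov correspondence, not the ideal-generation hypothesis). A secondary technical point is to reconcile the precise quantifier structure in the definition of $\mathcal K$-simplicity (the ``bounded independent of $k$'' clause) with the neighborhood-for-some-$k$ argument, but this is routine once one formulates ``not $\mathcal K$-simple'' as ``there is a $k$ for which every jet-neighborhood contains infinitely many classes''.
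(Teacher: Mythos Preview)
The paper does not supply a proof of this theorem; it is simply cited from Goryunov \cite{Go1}. So there is no in-paper argument to compare against, and your task is really to give a self-contained proof. Your contrapositive strategy and the lifting $F'(y,z)=F(y,z)+f'(y)-f(y)$ are the right ideas, and the restriction argument showing that fiber $\mathcal K$-equivalence of $F'_1,F'_2$ forces $\mathcal K$-equivalence of $f'_1,f'_2$ is correct.

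There is, however, a genuine gap at the last step. You assert that passing from ``$F'_\alpha$ pairwise fiber $\mathcal K$-inequivalent'' to ``projections of $(F'_\alpha)^{-1}(0)$ pairwise $\mathcal A$-inequivalent'' uses only the \emph{easy} direction of the Goryunov correspondence. It does not: the easy direction is that fiber $\mathcal K$-equivalence of the $F$'s implies equivalence of the projections. What you need is the converse---equivalent projections force fiber $\mathcal K$-equivalent $F$'s---and, as the paper explicitly remarks just above the proposition, this reverse implication requires that the ideal of functions vanishing on $F^{-1}(0)$ be generated by the components of $F$. Without that hypothesis, two fiber $\mathcal K$-inequivalent deformations can cut out the same zero set and hence give the same projection, collapsing your infinitely many classes.

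The fix is straightforward but must be made explicit: since $F^{-1}(0)$ is by hypothesis a smooth embedded submanifold, you may assume without loss of generality that $F$ is a submersion-germ (replace it by a regular system of defining equations). Submersion is an open condition on $1$-jets, so for $j^k f'_\alpha$ sufficiently close to $j^k f$ (with $k\ge 1$) the lifts $F'_\alpha$ are still submersions, their components generate the ideal of their zero sets, and the hard direction of the correspondence applies. Once you insert this reduction, the argument goes through.
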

\begin{thm}[\cite{Mart, Mather}]\label{main} The map-germ $F:\mathbb R^s\times\mathbb R^p\to\mathbb R^t$ is a $\mathcal K$-versal deformation
of a rank-0 map-germ $f:\mathbb R^s\to\mathbb R^t$ of finite $\mathcal K$-codimension if and only if the projection-germ
of $F^{-1}(0)$  onto $\mathbb R^p$ is $\mathcal A$-stable (infinitesimally stable).
\end{thm}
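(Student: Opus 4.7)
The plan is to reduce both conditions to their infinitesimal counterparts and verify they coincide. By Mather's infinitesimal stability theorem, $\mathcal A$-stability of a map-germ is equivalent to infinitesimal $\mathcal A$-stability, and, for a rank-$0$ map-germ $f$ of finite $\mathcal K$-codimension, $\mathcal K$-versality of an unfolding $F$ is equivalent to the classical infinitesimal criterion
\[
\theta(f) \;=\; tf(\theta_s) \,+\, (f_1,\ldots,f_t)\,\theta(f) \,+\, \mathbb R\langle \dot F_1,\ldots,\dot F_p\rangle, \qquad \dot F_j := \tfrac{\partial F}{\partial z_j}(y,0).
\]
Finiteness of the $\mathcal K$-codimension makes the quotient $\theta(f)/(tf(\theta_s) + (f)\theta(f))$ a finite-dimensional $\mathbb R$-vector space, so the criterion is effective.

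\textbf{Translation via the defining ideal.} By the Proposition immediately preceding the statement (the Goryunov characterization), $\mathcal A$-equivalence of the projection of $F^{-1}(0)$ onto $\mathbb R^p$ corresponds to fiber $\mathcal K$-equivalence of $F$. At the infinitesimal level this identifies the tangent space to the $\mathcal A$-orbit of the projection with the fiber $\mathcal K$-tangent module
\[
T\mathcal K_{\mathrm{fib}}(F) \;=\; tF\bigl(\theta_{s+p}^{\,\mathrm{fib}}\bigr) \,+\, (F)\,\theta(F),
\]
where $\theta_{s+p}^{\,\mathrm{fib}}$ consists of vector fields on $\mathbb R^{s+p}$ preserving the fibres of the projection $pr:\mathbb R^s\times\mathbb R^p\to\mathbb R^p$. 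The horizontal piece of a fiber-preserving vector field accounts for diffeomorphisms of the base, while the matrix factor built into $\mathcal K$-equivalence supplies the $(F)\theta(F)$ summand. Infinitesimal $\mathcal A$-stability of the projection therefore becomes the single module equation $\theta(F) = T\mathcal K_{\mathrm{fib}}(F)$.

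\textbf{From the ambient to the unfolded criterion.} The heart of the proof is to show that $\theta(F) = T\mathcal K_{\mathrm{fib}}(F)$ is equivalent to the $\mathcal K$-versality criterion displayed above. Writing a fiber-preserving vector field as $X = X_y(y,z)\partial_y + X_z(z)\partial_z$, the contribution $X_z(z)\cdot \partial_z F$ restricted to $z=0$ produces $\mathcal E_p$-linear combinations of the $\dot F_j$, while $X_y\partial_y F$ and $(F)\theta(F)$ restrict to $tf(\theta_s)$ and $(f)\theta(f)$ respectively. Malgrange's preparation theorem is then invoked to replace $\mathcal E_p$-generation by $\mathbb R$-generation, which completes the translation in both directions.

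\textbf{Main obstacle.} The delicate step is this application of Malgrange preparation. One must know that the quotient $\theta(F) / [\,tF(\theta_{s+p}^{\mathrm{fib}}) + (F)\theta(F)\,]$ is a finitely generated $\mathcal E_p$-module via $pr^{*}$, so that preparation converts $\mathcal E_p$-generation into $\mathbb R$-generation after quotienting by $pr^{*}\mathfrak m_p$. The finite $\mathcal K$-codimension of $f$ provides the required finiteness, and the rank-$0$ hypothesis ensures $f^{*}\mathfrak m_t \subseteq \mathfrak m_s^{2}$, so that Nakayama's lemma applies cleanly at the closed point. Once this dictionary is in place, both implications of the theorem follow by direct inspection of the two tangent-space descriptions.
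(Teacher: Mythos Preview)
The paper does not give a proof of this theorem; it is stated with a citation to Martinet and Mather and then used as a black box. So there is no ``paper's own proof'' to compare against.

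Your outline is the standard Martinet argument and is essentially correct: reduce both sides to infinitesimal criteria, identify $\mathcal A$-stability of the projection with a fibered $\mathcal K$-tangent-space condition on $F$, and then use Malgrange preparation (with the finite $\mathcal K$-codimension hypothesis supplying the needed finiteness) to pass between $\mathcal E_p$-generation and $\mathbb R$-generation modulo $pr^*\mathfrak m_p$. One point you should make more explicit is why the projection $F^{-1}(0)\to\mathbb R^p$ is even a smooth map-germ to which $\mathcal A$-stability can be applied: you need $F^{-1}(0)$ to be a smooth manifold of dimension $s+p-t$, i.e.\ $F$ must be a submersion at the origin. Since $f$ has rank $0$, this forces the initial speeds $\dot F_j$ to span $\mathbb R^t$; in the forward direction this follows from $\mathcal K$-versality, but in the reverse direction it is a hypothesis you are implicitly assuming. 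A second point worth sharpening is the passage from the Goryunov proposition (an orbit-level statement) to the identification of tangent modules: concretely, after using the implicit function theorem to parametrise $F^{-1}(0)$ by $(y,z'')\in\mathbb R^{s}\times\mathbb R^{p-t}$, the projection becomes an unfolding $(y,z'')\mapsto(\phi(y,z''),z'')$ of a rank-$0$ germ $\phi(\cdot,0)$ that is $\mathcal C$-equivalent to $f$, and Martinet's criterion applies directly to this unfolding. With those two clarifications your argument is complete.
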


By Theorems \ref{simple-simple} and \ref{main}, in order to classify stable singularities of
projections one considers deformations of three classes of
singularities: simple singularities of hypersurfaces (Table
\ref{ADE}), simple singularities of curves in a $3$-dimensional
space (Table \ref{Giusti}), simple singularities of a multiple
point on a plane (Table \ref{I}). We are interested in projections
"onto" when the projected submanifold $S=F^{-1}(0)$ is smooth and
the dimension of the base $B$ of the bundle is greater than 1.

In order to see in a more clear way how these three tables are applied to the
classification of singularities of affine equidistants, we now turn to the contact viewpoint.

\subsection{Singularities of contact}

Let $N_1, N_2$ be germs at $x$ of smooth $n$-dimensional submanifolds of
the space $\mathbb R^{q}$, with $2n\ge q$. We describe $N_1,N_2$  in the following way:
\begin{itemize}
\item
$N_1=f^{-1}(0)$, where $f:(\mathbb R^q,x)\rightarrow (\mathbb R^{q-n},0)$ is a submersion-germ,
\item
$N_2=g(\mathbb R^n)$, where $g:(\mathbb R^n,0)\rightarrow (\mathbb R^q,x)$ is an embedding-germ.
\end{itemize}

Let $\tilde{N_1}, \tilde{N_2}$ be another pair of germs at $\tilde{x}$ of smooth $n$-dimensional submanifolds of
the space $\mathbb R^{q}$, described in the same way.

\begin{defn}
The contact of $N_1$ and $N_2$ at $x$ is of the same {\bf contact-type} as the contact of $\tilde{N_1}$ and $\tilde{N_2}$ at $\tilde{x}$ if there exists a diffeomorphism-germ
$\Phi:(\mathbb R^q,x)\rightarrow (\mathbb R^q,\tilde{x})$ such that $\Phi(N_1)=\tilde{N_1}$ and $\Phi(N_2)=\tilde{N_2}$. We denote the contact-type of $N_1$ and $N_2$ at $x$ by $\mathcal K(N_1,N_2,x)$.
\end{defn}

\begin{defn}
A {\bf contact map} between submanifold-germs $N_1, N_2$ is the following map-germ
$f\circ g:(\mathbb R^n,0)\rightarrow (\mathbb R^{q-n},0)$.
\end{defn}

\begin{thm}[\cite{Mont}]
$\mathcal K(N_1,N_2,x) =\mathcal K(\tilde{N_1},\tilde{N_2},\tilde{x})$ if and only if the contact maps $f\circ g$ and $\tilde{f}\circ \tilde{g}$ are $\mathcal K$-equivalent.
\end{thm}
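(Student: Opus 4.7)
The plan is to handle the two implications separately, exploiting the ideal-theoretic reformulation of $\mathcal K$-equivalence given by Theorem \ref{V-ideal}.

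For the forward direction ($\Rightarrow$), I would start from the ambient diffeomorphism-germ $\Phi:(\mathbb R^q,x)\to(\mathbb R^q,\tilde x)$ with $\Phi(N_1)=\tilde N_1$ and $\Phi(N_2)=\tilde N_2$ and extract the two ingredients of a $\mathcal K$-equivalence. Since $\tilde f\circ \Phi$ is a submersion-germ to $(\mathbb R^{q-n},0)$ vanishing exactly on $N_1$, and $f$ also cuts out $N_1$ transversally, the standard fact that two submersions with the same regular zero set differ by an invertible matrix produces a smooth map-germ $\mathbb A:(\mathbb R^q,x)\to GL(\mathbb R^{q-n})$ with $\tilde f\circ \Phi=\mathbb A\cdot f$. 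On the parametrization side, $\Phi\circ g$ and $\tilde g$ are both embedding-germs with image $\tilde N_2$, so $\phi:=\tilde g^{-1}\circ \Phi\circ g$ is a diffeomorphism-germ of $(\mathbb R^n,0)$ satisfying $\Phi\circ g=\tilde g\circ\phi$. Composing,
\begin{equation*}
\tilde f\circ \tilde g\circ \phi \;=\; \tilde f\circ \Phi\circ g \;=\; (\mathbb A\circ g)\cdot(f\circ g),
\end{equation*}
which is exactly a $\mathcal K$-equivalence between $f\circ g$ and $\tilde f\circ \tilde g$.

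For the converse ($\Leftarrow$), which is the substantive direction, I would begin by flattening the parametrized submanifolds. Using ambient diffeomorphisms on each side, I would normalize so that $N_2=\tilde N_2=\mathbb R^n\times\{0\}\subset \mathbb R^q$ and $g=\tilde g$ is the standard inclusion. The hypothesis then reads $\tilde f\circ \iota = A\cdot((f\circ \iota)\circ \phi)$ for a diffeomorphism $\phi$ of $(\mathbb R^n,0)$ and $A:(\mathbb R^n,0)\to GL(\mathbb R^{q-n})$. I would extend both trivially in the normal direction, setting $\Phi_0(y,z):=(\phi(y),z)$ and $\widetilde{\mathbb A}(y,z):=A(y)$; by construction $\Phi_0$ preserves $\mathbb R^n\times\{0\}$, and the submersion-germs $\widetilde{\mathbb A}\cdot(f\circ \Phi_0^{-1})$ and $\tilde f$ agree on $\mathbb R^n\times\{0\}$.

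The main obstacle is then to upgrade this agreement on $\tilde N_2$ to an ambient relation that also sends $N_1$ to $\tilde N_1$. For this I would invoke Theorem \ref{V-ideal} at the ambient level: the ideals generated by the components of $\widetilde{\mathbb A}\cdot(f\circ \Phi_0^{-1})$ and by the components of $\tilde f$ coincide along $\mathbb R^n\times\{0\}$, and both map-germs are submersions of maximal rank $q-n$ at the origin. A relative version of the implicit function theorem (or, equivalently, a Moser/Thom–Levine path-method argument carried out in the normal bundle to $\mathbb R^n\times\{0\}$ and keeping this subspace fixed) then produces an ambient diffeomorphism-germ $\Psi$ fixing $N_2$ pointwise and satisfying $\tilde f\circ \Psi = \widehat{\mathbb A}\cdot(\widetilde{\mathbb A}\cdot(f\circ \Phi_0^{-1}))$ for some invertible matrix factor $\widehat{\mathbb A}$. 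The composition $\Phi:=\Psi\circ \Phi_0$ then simultaneously carries $N_2$ to $\tilde N_2$ and $N_1$ to $\tilde N_1$, proving $\mathcal K(N_1,N_2,x)=\mathcal K(\tilde N_1,\tilde N_2,\tilde x)$. I expect the delicate point to be precisely this normal-direction correction: the data of $\mathcal K$-equivalence of the contact maps only constrains $f$ and $\tilde f$ on the parametrized submanifold, and one must leverage the submersion property of $f,\tilde f$ to propagate the equivalence to a neighborhood of $x$ in $\mathbb R^q$.
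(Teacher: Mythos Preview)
The paper does not supply its own proof of this theorem: it is stated with attribution to Montaldi \cite{Mont} and used as a quoted result, so there is no argument in the paper to compare your proposal against.

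That said, your outline follows the standard route of Montaldi's original argument. The forward direction is exactly as one proves it: pull back the submersion $\tilde f$ through $\Phi$ and compare with $f$ via an invertible matrix, and reparametrize the embedding side. For the converse, your plan to flatten $N_2$ and $\tilde N_2$ to $\mathbb R^n\times\{0\}$ and then correct in the normal direction is also the usual strategy; the step you flag as delicate can in fact be done elementarily. After your adjustments you have two submersion-germs $h_1,h_2:(\mathbb R^n\times\mathbb R^{q-n},0)\to(\mathbb R^{q-n},0)$ with $h_1|_{\mathbb R^n\times\{0\}}=h_2|_{\mathbb R^n\times\{0\}}$; Hadamard's lemma gives $h_1-h_2=C(y,z)\,z$, and since each $h_i$ is a submersion one may further arrange coordinates (preserving $\mathbb R^n\times\{0\}$) so that the $z$-derivative of, say, $h_2$ is the identity, after which the map $(y,z)\mapsto(y,h_2(y,z))$ is a diffeomorphism fixing $\mathbb R^n\times\{0\}$ and straightening $h_2$ to a projection; doing the same to $h_1$ reduces the problem to comparing two projections, which is trivial. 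So the path-method machinery you invoke is not really needed, but your proposal is correct in substance.
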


\begin{rem}If $N_1$ and $N_2$ are transversal at $x$ then it is obvious that the contact map $f\circ g:(\mathbb R^n,0)\rightarrow
(\mathbb R^{q-n},0)$ is a submersion-germ or a diffeomorphism-germ (when $q=2n$). \end{rem}

The interesting cases are when $N_1$ and $N_2$ are not transversal at $x_0$
$$T_{x_0}N_1 + T_{x_0}N_2\ne T_{x_0}\mathbb R^{q}.$$

\begin{defn}
We say that $N_1$ and $N_2$ are $k$-{\bf tangent} at $x_0$ if
$$\dim (T_{x_0}N_1 \cap T_{x_0}N_2)=k \ .$$
If $k$ is maximal, that is
$$ k=n=\dim(T_{x_0}N_1)=\dim(T_{x_0}N_2) \ ,$$
we say that $N_1$ and $N_2$ are {\bf tangent} at $x_0$.
\end{defn}

\begin{rem}\label{tan-par} In order to bring this definition into the context of affine equidistants, $E_{\lambda}(M)$, note that  $N_1=M^+$ and $N_2=\mathcal R_0^{\lambda}(M^-)$ are $k$-{\bf tangent} at $0$  if and only if $T_aM^+$ and $T_bM^-$ are $k$-{\bf parallel}, where $\lambda a +(1-\lambda) b = 0\in E_{\lambda}(M)$. \end{rem}

If $N_1$ and $N_2$ are $k$-tangent then we can describe germs of $N_1$ and $N_2$ at $0$ in the following way:
\begin{equation}\label{M}
 N_1=\{(y,z,u,v)\in \mathbb R^q: u=\phi(y,z), \ v=\psi(y,z)\},
\end{equation}
\begin{equation}\label{N}
N_2=\{(y,z,u,v)\in \mathbb R^q: z=\eta(y,v), \ u=\zeta(y,v)\},
\end{equation}
where $y=(y_1,\cdots,y_k)$, $z=(z_1,\cdots,z_{n-k})$, $u=(u_1,\cdots,u_{q+k-2n})$, $v=(v_1,\cdots,v_{n-k})$ and $(y,z,u,v)$ is a coordinate system on the affine space $\mathbb R^{q}$, $\phi=(\phi_1,\cdots,\phi_{q+k-2n})$, $\psi=(\psi_1,\cdots,\psi_{n-k})$, $\eta=(\eta_1,\cdots,\eta_{n-k})$, $\zeta=(\zeta_1,\cdots,\zeta_{q+k-2n})$ and $\phi_i, \psi_j, \eta_j, \zeta_i \in \mathcal M_q^2$ for $i=1,\cdots, q+k-2n$ and $j=1,\cdots,n-k$.

Then, the  contact map $\kappa_{N_1,N_2}:(\mathbb R^n,0)\rightarrow (\mathbb R^{q-n},0)$ is given by:
\begin{equation}\label{contactmap} \kappa_{N_1,N_2}(y,z)=(z-\eta(y,\psi(y,z)),\phi(y,z)-\zeta(y,\psi(y,z)))
\end{equation}

From the form of $\kappa_{N_1,N_2}$ we easily obtain the following fact
\begin{prop}
If $N_1$ and $N_2$ are $k$-tangent at $0$ then the corank of the contact map $\kappa_{N_1,N_2}$ is $k$.
\end{prop}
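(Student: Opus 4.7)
The plan is to verify the claim by a direct computation of the Jacobian of $\kappa_{N_1,N_2}$ at the origin, starting from the explicit formula~(\ref{contactmap}) and exploiting the hypothesis that the components of $\phi,\psi,\eta,\zeta$ all lie in $\mathcal M_q^2$. The latter means that each of these functions vanishes to order at least two at $0$, so that all of its first partial derivatives vanish there.

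Writing $\kappa_{N_1,N_2}=(\kappa_1,\kappa_2)$ with
\[
\kappa_1(y,z)=z-\eta(y,\psi(y,z))\in\mathbb R^{n-k},\qquad \kappa_2(y,z)=\phi(y,z)-\zeta(y,\psi(y,z))\in\mathbb R^{q+k-2n},
\]
I will compute the four blocks of $D\kappa_{N_1,N_2}|_0$ by the chain rule. Every term produced by differentiating involves a first derivative of $\phi$, $\psi$, $\eta$, or $\zeta$ (each vanishing at $0$), so the only surviving contribution is the linear $z$ appearing in $\kappa_1$. This gives
\[
D\kappa_{N_1,N_2}|_0=\begin{pmatrix}0_{(n-k)\times k} & I_{n-k}\\ 0_{(q+k-2n)\times k} & 0_{(q+k-2n)\times(n-k)}\end{pmatrix},
\]
where the columns are ordered as $(y_1,\dots,y_k,z_1,\dots,z_{n-k})$ and the rows as the components of $\kappa_1$ followed by those of $\kappa_2$. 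This matrix has rank $n-k$, so $\ker D\kappa_{N_1,N_2}|_0$ has dimension $n-(n-k)=k$, and this is the corank at $0$.

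The argument has no genuine obstacle; the only point of care is the bookkeeping with the chain rule. It is worth noting, however, that the resulting block structure is geometrically forced: in the coordinates~(\ref{M})--(\ref{N}), the $y$-directions span precisely $T_0N_1\cap T_0N_2$, so they must drop out of every first-order contribution to the contact map, while the $z$-directions are transverse to this common tangent space. Equivalently, the kernel of $D\kappa_{N_1,N_2}|_0$ is canonically identified with $T_0N_1\cap T_0N_2$, whose dimension is $k$ by the $k$-tangency hypothesis, so the linear-algebraic computation and its coordinate-free reformulation agree.
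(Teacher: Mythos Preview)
Your proof is correct and is exactly the computation the paper has in mind: the paper gives no argument at all beyond the sentence ``From the form of $\kappa_{N_1,N_2}$ we easily obtain the following fact,'' and your Jacobian calculation is precisely how one unpacks that sentence. The additional remark identifying $\ker D\kappa_{N_1,N_2}|_0$ with $T_0N_1\cap T_0N_2$ is a nice coordinate-free sanity check that the paper does not make explicit.
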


We can interpret the contact between two $k$-tangent $n$-dimensional submanifolds $N_1, N_2$ of $\mathbb R^{q}$ as the contact between tangent $k$-dimensional submanifolds $P_{N_1}$ and $P_{N_2}$ of $N_1$ and $N_2$, respectively, in a smooth $q-2n+2k$-dimensional submanifold $S$ of $\mathbb R^{q}$.
These submanifolds are constructed in the following way:

Let $H$ be a smooth $q+k-n$-dimensional submanifold-germ on $\mathbb R^{q}$ which contains $N_1$ and is transversal to $N_2$ at $0$. Then $P_{N_2}=H\cap N_2$ is a smooth $k$-dimensional submanifold on $N_2$.

Let $G$ be a smooth $q+k-n$-dimensional submanifold-germ on $\mathbb R^{q}$ which contains $N_2$ and is transversal to $N_1$ at $0$. Then $P_{N_1}=G\cap N_1$ is a smooth $k$-dimensional submanifold on $N_1$.

$P_{N_1}$ and $P_{N_2}$ are tangent at $0$ and they are contained in the smooth $q-2n+2k$-dimensional submanifold-germ $S=H\cap G$.\\

The contact between $N_1$ and $N_2$ at $0$ can now be described as the contact between $P_{N_1}$ and $P_{N_2}$ at $0$, which defines a rank-$0$ map
\begin{equation}\label{rank0map}
\kappa_{P_{N_1},P_{N_2}}:(\mathbb R^k,0)\rightarrow (\mathbb R^{k-(2n-q)},0) \ .
\end{equation}

Although in general $P_{N_1}$ and $P_{N_2}$ depend on the choices of $H$ and $G$, the contact type of $P_{N_1}$ and $P_{N_2}$ does not depend on these choices.
This means that if $\tilde N_1, \tilde N_2$ is another pair of germs at $0$ of smooth $n$-dimensional submanifold of $\mathbb R^{q}$ then we have the following result.
\begin{prop}\label{contactreduction}
$\mathcal K(N_1,N_2,0)=\mathcal K(\tilde N_1,\tilde N_2,0)$ if and only if $$\mathcal K(P_{N_1},P_{N_2},0)=\mathcal K(P_{\tilde N_1},P_{\tilde N_2},0).$$
\end{prop}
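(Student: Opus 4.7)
The plan is to reformulate both sides of the biconditional as $\mathcal K$-equivalences of contact maps, and then to identify $\kappa_{P_{N_1}, P_{N_2}}$ with the rank-$0$ reduction of $\kappa_{N_1, N_2}$. By Montaldi's theorem stated above, $\mathcal K(N_1, N_2, 0) = \mathcal K(\tilde N_1, \tilde N_2, 0)$ is equivalent to the $\mathcal K$-equivalence of $\kappa_{N_1, N_2}$ and $\kappa_{\tilde N_1, \tilde N_2}$, and similarly for the pairs $P$. The standard splitting lemma for $\mathcal K$-equivalence asserts that two map-germs of the same corank are $\mathcal K$-equivalent if and only if their rank-$0$ reductions are. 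Since, by the preceding proposition, $\kappa_{N_1, N_2}$ and $\kappa_{\tilde N_1, \tilde N_2}$ both have corank $k$, the claim will follow once I show that $\kappa_{P_{N_1}, P_{N_2}}$ is the rank-$0$ reduction of $\kappa_{N_1, N_2}$, for one convenient choice of $H$ and $G$.

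To realize this reduction I would work in the coordinates of (\ref{M})--(\ref{N}) and choose $H = \{v = \psi(y, z)\}$ and $G = \{z = \eta(y, v)\}$. The inclusions $H \supset N_1$ and $G \supset N_2$ are built in, while transversality of $H$ to $N_2$ (respectively of $G$ to $N_1$) at the origin follows from $\phi, \psi, \eta, \zeta \in \mathfrak m_q^2$. Applying the implicit function theorem to the system $v = \psi(y, z)$, $z = \eta(y, v)$ defining $S = H \cap G$, I obtain unique smooth solutions $z = Z(y)$, $v = V(y) = \psi(y, Z(y))$ with $Z, V \in \mathfrak m_k^2$, parameterizing $S$ by $(y, u) \in \mathbb R^k \times \mathbb R^{q + k - 2n}$. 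Inside $S$, the submanifolds $P_{N_1}$ and $P_{N_2}$ appear as the graphs $u = \phi(y, Z(y))$ and $u = \zeta(y, V(y))$ respectively, yielding
\[
\kappa_{P_{N_1}, P_{N_2}}(y) = \phi(y, Z(y)) - \zeta(y, \psi(y, Z(y))).
\]

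Comparing this with (\ref{contactmap}), the first $n - k$ components of $\kappa_{N_1, N_2}(y, z)$, namely $w := z - \eta(y, \psi(y, z))$, form a submersion onto $\mathbb R^{n-k}$ because $\partial_z w|_0 = I$. Replacing the source coordinate $z$ by $w$ via the local inverse $z = z(y, w)$, which satisfies $z(y, 0) = Z(y)$, the contact map takes the form $(y, w) \mapsto (w,\, \phi(y, z(y, w)) - \zeta(y, \psi(y, z(y, w))))$, whose second block at $w = 0$ is exactly $\kappa_{P_{N_1}, P_{N_2}}(y)$. By Theorem \ref{V-ideal} this proves that $\kappa_{P_{N_1}, P_{N_2}}$ is the rank-$0$ reduction of $\kappa_{N_1, N_2}$: modulo the regular sequence $(w_1, \ldots, w_{n-k})$, the ideal generated in $\mathcal E_n$ by the components of $\kappa_{N_1, N_2}$ descends to the ideal generated in $\mathcal E_k$ by the components of $\kappa_{P_{N_1}, P_{N_2}}$.

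I expect the main obstacle to be in making the previous identification precise at the level of $\mathcal K$-equivalence classes rather than merely of particular representatives. This is handled by the canonicality of the rank-$0$ reduction (a standard consequence of the splitting lemma), which in addition yields, as a by-product, the independence of $\mathcal K(P_{N_1}, P_{N_2}, 0)$ from the choices of $H$ and $G$ that is asserted in the text preceding the proposition.
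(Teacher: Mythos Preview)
Your argument is correct and follows the same core idea as the paper: both show that the contact map $\kappa_{P_{N_1},P_{N_2}}$ coincides, up to $\mathcal K$-equivalence, with the rank-$0$ part of $\kappa_{N_1,N_2}$ (equivalently, that the two contact maps differ by a suspension). The difference is in how the freedom in $H$ and $G$ is handled. The paper keeps $H$ and $G$ in full generality, writing $H=\{v=\psi(y,z)+A(y,z,u,v)(u-\phi(y,z))\}$ and $G=\{z=\eta(y,v)+B(y,z,u,v)(u-\zeta(y,v))\}$ for arbitrary matrix-valued germs $A,B$, observes that $N_1$ is equally described by the equation for $H$ together with $u=\phi(y,z)$ (and similarly for $N_2$), and then reads off directly that the contact maps agree after a suspension; this simultaneously proves the proposition and the independence from the choice of $H,G$. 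You instead fix the simplest representatives $A=B=0$, carry out the explicit implicit-function computation to identify $\kappa_{P_{N_1},P_{N_2}}$ with the rank-$0$ reduction, and then invoke the canonicality of that reduction (via the splitting lemma) to recover the independence. Your route is more explicit and perhaps more transparent about why the reduction works; the paper's route is shorter and yields the independence as an immediate by-product rather than as a separate appeal to a general lemma.
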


\begin{proof}
It is easy to see that in general $H$ can be described in the following way:
\begin{equation}\label{H}
v=\psi(y,z)+A(y,z,u,v)(u-\phi(y,z)),
\end{equation}
and $G$ can be described in the following way:
\begin{equation}\label{G}
z=\eta(y,v)+B(y,z,u,v)(u-\zeta(y,v)),
\end{equation}
where $A=(a_{ij})_{i=1,\cdots,q+k-2n}^{j=1,\cdots,n-k}, B=(b_{ij})_{i=1,\cdots,q+k-2n}^{j=1,\cdots,n-k}$ and $a_{ij}, b_{ij}$ are smooth function-germs on $\mathbb R^{q}$.

Thus $S=H\cap G$ is given by (\ref{H}) and (\ref{G}).

 $P_{N_1}$ is given by (\ref{H}), (\ref{G}), and $u=\phi(y,z)$ and $P_{N_2}$ is given by (\ref{H}), (\ref{G}) and $u=\zeta(y,v)$.

On the other hand we can also describe $N_1$ by (\ref{H}) and $u=\phi(y,z)$ and $N_2$ by (\ref{G}) and $u=\zeta(y,v)$.
Then it is easy to see that contact maps are the same after a suitable suspension.
\end{proof}

In view of Proposition \ref{contactreduction}, it is enough to classify the rank-$0$ map-germs of the form (\ref{rank0map}) with respect to the group $\mathcal K$.

\section{Stable singularities of affine equidistants}

Since our goal is to classify singularities of affine equidistants of $n$-dimensional submanifold $M$ of $\mathbb R^q$, we substitute submanifold-germs $N_1$ and $N_2$ of the previous section  by $N_1=M^+$ and $N_2=\mathcal R_0^{\lambda}(M^-)$, or equivalently by $N_1=M^-$ and $N_2=\mathcal R_0^{1-\lambda}(M^+)$, where $M^+$ and $M^-$ are germs of $M\subset\mathbb R^q$ at  points $a^+ \neq a^-\in M\subset\mathbb R^q$, such that $\lambda a^+ + (1-\lambda)a^-=0$.

First, we state the following definition and theorem:

\begin{defn} A mapping $\psi : N^m \to \mathbb R^q$  is \emph{locally stable}  at $p \in N^m$ if there exists a neighbourhood $W_p$ of $\psi$ in the space  $C^{\infty}(N^m, \mathbb R^q)$  of $C^{\infty}$-mappings from $N^m$ into
$\mathbb R^q$  with the  Whitney  $C^{\infty}$-topology, and neighbourhoods $U_p$ around $p$ and $V_p$ around $\psi(p)$ such that for all $\phi \in W_p,$  it follows that $\phi: U_p \to V_p$ is $\mathcal A$- equivalent to $\psi: U_p \to V_p,$  where $\mathcal A= Diff(U_p)\times Diff(V_p)$ (see \cite{GG}).
\end{defn}

\begin{thm}\label{genAstable}
For a residual set of embeddings $\iota : M^n \rightarrow \mathbb R^q$ the map $ {\pi}_{\lambda}\circ (\iota\times \iota):M\times M\setminus \Delta \rightarrow \mathbb R^q$ is locally stable whenever the pair $(2n,q)$ is a pair of nice dimensions, where $\Delta$ is the diagonal in $M\times M$.
\end{thm}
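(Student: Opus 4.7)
The plan is to deduce the theorem from the Multijet Transversality Theorem \cite{GG} together with Mather's characterisation of nice dimensions. First I would observe that the germ of $\tilde{\pi}_\lambda := \pi_\lambda\circ(\iota\times\iota)$ at any point $(a^+,a^-)\in M\times M\setminus\Delta$ is completely determined by the germ of $\iota\times\iota$ there, equivalently by the $2$-multijet of $\iota$ at the unordered pair $\{a^+,a^-\}$. Since $\pi_\lambda$ is a fixed linear surjection, the resulting assignment from pairs of $k$-jets of $\iota$ at $(a^+,a^-)$ to the $k$-jet of $\tilde{\pi}_\lambda$ at $(a^+,a^-)$ is smooth and, over each point of $M\times M\setminus\Delta$, a submersion onto the corresponding fibre of $J^k(M\times M,\mathbb R^q)$.

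Next, I would invoke the definition of nice dimensions: that $(2n,q)$ be a pair of nice dimensions means precisely that, for $k$ sufficiently large, the complement $\Sigma^k\subset J^k_{0,0}(\mathbb R^{2n},\mathbb R^q)$ of the open union of $\mathcal K$-orbits whose representatives are $\mathcal A$-stable map-germs is a $\mathcal K$-invariant semi-algebraic subset of codimension strictly greater than $2n$. Globalising and then pulling back through the submersive correspondence of the previous paragraph, one obtains a closed semi-algebraic stratified subset of codimension $>2n$ inside the multijet bundle $\,_2J^k(M,\mathbb R^q)\bigr|_{M\times M\setminus\Delta}$.

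Now I would apply the Multijet Transversality Theorem to the family $\{\,_2j^k\iota\}_{\iota}$: for a residual set of embeddings $\iota\in\mathrm{Emb}(M,\mathbb R^q)$, the multijet extension $_2j^k\iota$ is transverse to every stratum of this stratification. Because the source $M\times M\setminus\Delta$ has dimension $2n$ and the bad stratum has codimension exceeding $2n$, transversality forces $_2j^k\iota$ to miss it entirely. Via the submersive correspondence of the first step, this is equivalent to $j^k\tilde{\pi}_\lambda$ avoiding every non-stable $k$-jet; by Theorem \ref{main} this is exactly the condition that $\tilde{\pi}_\lambda$ be $\mathcal A$-stable at every point of $M\times M\setminus\Delta$, as desired.

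The chief technical obstacle I would anticipate is verifying the submersivity statement in the first step, since it is what permits transversality to be transferred from the multijet extension of the embedding to the jet extension of the composed map. This is also where the hypothesis $a^+\neq a^-$ is indispensable: only after removing the diagonal can one independently vary $\iota$ in disjoint neighbourhoods of $a^+$ and $a^-$, and it is precisely this independence that makes the induced map on jet fibres surjective. Given this, the codimension bound on $\Sigma^k$ is then imported as part of Mather's definition of nice dimensions, and the rest of the argument is a standard transversality application.
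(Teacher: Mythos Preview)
Your proposal follows the same overall strategy as the paper: pull a $\mathcal K$-invariant stratification of $J^r(M\times M,\mathbb R^q)$ back along the map induced by $\pi_\lambda$, apply the Multijet Transversality Theorem to ${}_2j^r\iota$, and use the nice-dimensions codimension bound to conclude local stability of $\tilde\pi_\lambda$.

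There is, however, one inaccuracy in your first step. You assert that the fibrewise assignment taking a pair of $k$-jets of $\iota$ at $(a^+,a^-)$ to the $k$-jet of $\tilde\pi_\lambda$ is a submersion onto the fibre $J^k_{(a^+,a^-)}(M\times M,\mathbb R^q)$. That is false: in local coordinates the image of this assignment consists only of jets of the form $(s,t)\mapsto \lambda P^+(s)+(1-\lambda)P^-(t)$, which contain no monomials mixing $s$ and $t$; for $k\geq 2$ this is a proper linear subspace of the target fibre, so the map cannot be a submersion. Independence of $\iota$ near $a^+$ and $a^-$ lets you vary $P^+$ and $P^-$ freely, but it does not produce the missing cross-terms.

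The paper avoids this by inserting an intermediate space. It uses the genuine submersion
\[
(\pi_\lambda)_*: J^r(M\times M,\mathbb R^q\times\mathbb R^q)\longrightarrow J^r(M\times M,\mathbb R^q),
\]
pulls the stratification $\mathcal S$ back to a stratification $\mathcal S^*$ of the larger jet space, and records the equivalence $j^r(\iota\times\iota)\pitchfork\mathcal S^* \iff j^r\tilde\pi_\lambda\pitchfork\mathcal S$ there. Only afterwards does it identify $j^r(\iota\times\iota)|_{M\times M\setminus\Delta}$ with the multijet section ${}_2j^r\iota$, so that the Multijet Transversality Theorem applies to the strata $W_j^*$. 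If you replace your direct submersivity claim by this two-step factorisation, the rest of your argument --- in particular your use of the codimension characterisation of nice dimensions to force the jet to miss the non-stable locus --- agrees with the paper's.
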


\begin{proof}
From the diagram of maps
$$
\begin{array}{ccccccccc}\label{diagram}
 & \iota\times\iota&  & \pi_{\lambda} & \\
M\times  M & \longrightarrow & \mathbb R^q\times
\mathbb R^{q} & \to &
\mathbb R^{q} \ ,\\
\end{array}
$$
we obtain the diagram of $r$-jet maps
$$
\begin{array}{ccccccccc}\label{diagram}
 & j^r(\iota\times\iota)&  & (\pi_{\lambda})_* & \\
M\times  M & \longrightarrow & J^r(M\times M,\mathbb R^q\times
\mathbb R^{q}) & \to &
J^r(M\times M,\mathbb R^{q}) \ .\\
\end{array}
$$
A typical fiber of $J^r(M\times M,\mathbb R^{q})$ is $J^r_0(M\times M,\mathbb R^{q})$, the space of (degree $\leq r$)-polynomial map-germs $\mathbb R^n\times\mathbb R^n\to \mathbb R^q$, vanishing at $0$.

Let $\{W_1, \ldots, W_s\}$  be the finite set of all $\mathcal K$ simple orbits in $J^r(M\times M, \mathbb R^q),$ and let
$\{W_{s+1},\ldots, W_{t}\}$ be a finite stratification of the complement of the union of simple orbits $W_1\cup\ldots\cup W_s.$
This stratification exists because these are semialgebraic sets. We denote by $\mathcal S=\{{W_j}\}_{1\leq j\leq t}$ the resulting stratification of $J^r(M\times M, \mathbb R^q).$ Because $ (\pi_{\lambda})_*$ is a submersion,  $(\pi_{\lambda})_*^{-1} W_j=W_j^{*}$ is a submanifold  of $J^r(M\times M,\mathbb R^q\times \mathbb R^{q})$, for all $j=1,\ldots,t$, so that
${\mathcal S}^{*}=\{W_j^{*}\}_{1\leq j\leq t}$ is a stratification of this space.

Furthermore,
\begin{equation}\label{iff}  j^r(\iota\times\iota) \ \pitchfork \ {\mathcal S^*} \ \iff \ j^r(\pi_{\lambda}\circ(\iota\times\iota))  \ \pitchfork \ {\mathcal S} \ , \end{equation} (where transversality to $\mathcal S$ (respectively to $\mathcal S^{*}$) means transversality of $j^r(\iota\times \iota)$  (respectively  $j^r(\pi_{\lambda}\circ(\iota\times \iota))$ to each stratum of the corresponding stratification.

On the other hand, under the natural identification
$$j^r(\iota\times\iota)|_{M\times M\setminus\Delta} \ \simeq \  { }_{2}j^r\iota \ \subset \
 { }_{2}J^r(M,\mathbb R^q) \ ,$$
where $ { }_{2}J^r(M,\mathbb R^q)$ is the space of double $r$-jets, we can apply the Multijet Transversality Theorem \cite{GG} to get that, for each $W_{j}^{*}$ in  ${ }_{2}J^r(M,\mathbb R^q),$ the set of immersions
$$\mathcal {R}_{W_j}=\{\iota: M \to \mathbb R^q\,|  {}_{2}j^r \iota \pitchfork W_j^{*}\}$$ is residual. Then, the set
$$\mathcal R = \cap_{j=1}^{t} \mathcal  R_{W_j}$$ is also residual.

Now, it follows from equation (\ref{iff}) that  $j^r(\pi_{\lambda}\circ (\iota \times \iota)) \pitchfork W_j$, for all $\iota \in \mathcal R$, for all $j=1,\ldots, t.$ When $(2n,q)$ is a pair of nice dimensions, this implies that $j^r(\pi_{\lambda}\circ (\iota \times \iota))$ is transversal to all $\mathcal K$ orbits in $J^r(M\times M, \mathbb R^q),$ which says that this mapping is locally stable (see \cite{GG, Mather}).
\end{proof}

\begin{thm}[\cite{Mather}]\label{nicedim} The nice dimensions for pairs $(2n,q)$  are: \\
(i) $n<q=2n, \ n\leq 4$ \\
(ii) $n<q=2n-1, \ n\leq 4$ \\
(iii) $n<q=2n-2, \ n\leq 3$ \\
(iv) $n<q\leq 2n-3, q\leq 6$
\end{thm}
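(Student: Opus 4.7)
The plan is to follow Mather's classical approach to nice dimensions, from which the theorem is directly cited. Recall that $(s,t)$ is a \emph{nice pair} of dimensions if $C^\infty$-stable proper maps are dense in the space of smooth proper maps $N^s \to P^t$. The first step is Mather's reduction: $(s,t)$ is nice if and only if, for some (equivalently, every sufficiently large) $r$, the subset of $J^r_{0,0}(s,t)$ consisting of jets whose $\mathcal K$-orbit admits moduli has codimension strictly greater than $s$. Combined with the Multijet Transversality Theorem (as applied already in the proof of Theorem \ref{genAstable}), this equates density of stable maps with the possibility of making a generic $j^r$-extension transverse to the $\mathcal K$-orbit stratification, reducing the problem to a codimension count on the non-simple stratum.

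Second, for each candidate pair $(2n,q)$ I would enumerate the $\mathcal K$-simple orbits of $J^r_{0,0}(2n,q)$ using the classification stated in Theorem \ref{k-simple}, together with its natural extension to arbitrary source and target dimensions via suspension and via the contact reduction of Proposition \ref{contactreduction}. For each such orbit one reads off the $\mathcal K$-codimension from the standard formulas (Tjurina number for hypersurfaces, and the analogous computations for Giusti's and Mather's lists in higher codimension), and one identifies the first modular family, obtaining the critical threshold $\sigma(2n,q)$ at which moduli begin to appear.

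The final step is to verify $\sigma(2n,q) > 2n$ for each pair listed in items (i)--(iv) and to exhibit, for each pair \emph{excluded} by the statement --- most notably $(2n,q)=(8,6)$ and the pairs $(2n,q>6)$ with $n\geq 5$ --- an explicit unimodular family of $\mathcal K$-orbits of codimension $\leq 2n$, witnessing the failure of density of stable maps. The main obstacle is exactly this sharp codimension bookkeeping at the boundary: showing that the first modular family in $J^r_{0,0}(2n,q)$ really drops to codimension $\leq 2n$ on the excluded side while remaining of higher codimension on the included side. This is the delicate part of Mather's original argument and relies on careful control of $\mathcal K$-adjacencies via the Thom--Boardman stratification and on the computation of the moduli number of the exceptional families (parabolic and hyperbolic families in the hypersurface case, matrix unimodal families $\Pi_{k,l}$ in Giusti's classification, and their analogues in higher corank). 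The uniform parts of the classification --- items (i)--(ii) reducing to Arnol'd's $ADE$ list of hypersurface singularities, item (iii) to Giusti's complete intersection classification --- are comparatively routine once the codimension threshold has been fixed, while case (iv) requires the matrix/complete-intersection bookkeeping that forces the constraint $q \leq 6$.
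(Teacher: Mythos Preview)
The paper does not prove this theorem: it is stated with the citation \cite{Mather} and no proof is given. So there is no proof in the paper to compare your proposal against; the authors simply quote Mather's result and then specialise his boundary curve to source dimension $2n$ and target dimension $q$ with $n<q\leq 2n$.

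Your outline is a reasonable sketch of Mather's original strategy --- reduce niceness to the inequality $\sigma(2n,q)>2n$ for the codimension of the non-simple locus in jet space, then compute $\sigma$ case by case --- but two points deserve correction. First, invoking Proposition~\ref{contactreduction} here is a red herring: that reduction is specific to the contact description of equidistants in this paper and plays no role in the general nice-dimensions computation, which works directly with the Thom--Boardman stratification and the $\mathcal K$-orbit structure of $J^r_{0,0}(2n,q)$. Second, your case-by-case attribution of classification tables is off: item~(i) is the equidimensional case $q=2n$, where the corank-$2$ stratum is governed by the $C$, $\widetilde C$, $F$, $G^*$, $H$ families of Table~\ref{I} rather than the $ADE$ list, and item~(ii) with $q=2n-1$ already mixes $ADE$ (corank~$1$) with Giusti's Table~\ref{Giusti} (corank~$2$). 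These are bookkeeping slips rather than structural errors, but since the entire content of the theorem is precisely this bookkeeping, they matter if you intend to write the argument out in full.
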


Thinking locally, denote two distinct germs of embedding $\iota:M^n\to \mathbb R^q$ by $\iota^+: (\mathbb R^n,0)\to (\mathbb R^q,a^+)$ and $\iota^-: (\mathbb R^n,0)\to (\mathbb R^q,a^-)$, and by
\begin{equation}\label{tildepi}
\tilde\pi_{\lambda} = \pi_{\lambda}\circ(\iota^+\times\iota^-) :  (\mathbb R^{2n},0)\to (\mathbb R^q,0) \  ,
\end{equation}
the restriction of $\pi_{\lambda}$  to $M^+\times M^-$. Then,
 recalling the notation of (\ref{M})-(\ref{N}), $\tilde\pi_{\lambda}$ is given by
\begin{equation}\label{localtildepi}
\tilde\pi_{\lambda}: (y,z,\tilde{y},v)\mapsto (\tilde\pi_{\lambda}^1(y,\tilde{y}), \tilde\pi_{\lambda}^2(z,\tilde{y},v), \tilde\pi_{\lambda}^3(y,z,\tilde{y},v), \tilde\pi_{\lambda}^4(y,z,v))
\end{equation}
where $y,\tilde{y}\in\mathbb R^k$, $z,v\in\mathbb R^{n-k}$, and
\begin{equation}\label{pitilde1}
\tilde\pi_{\lambda}^1(y,\tilde{y})= \lambda y +(1-\lambda)\tilde{y},
\end{equation}
\begin{equation}\label{pitilde2}
\tilde\pi_{\lambda}^2(z,\tilde{y},v)=
\lambda z +(1-\lambda)\eta(\tilde{y},v),
\end{equation}
\begin{equation}\label{pitilde3}
\tilde\pi_{\lambda}^3(y,z,\tilde{y},v)=
\lambda\phi(y,z)+(1-\lambda)\zeta(\tilde{y},v),
\end{equation}
\begin{equation}\label{pitilde4}
\tilde\pi_{\lambda}^4(y,z,v)=\lambda\psi(y,z)+(1-\lambda)v.
\end{equation}

 Let
$$\kappa_{\lambda}: (\mathbb R^n,0)\to (\mathbb R^{q-n},0)$$ denote the the contact-map between $M^+$ and $\mathcal R_0^{\lambda}(M^-)$.
We have:
\begin{prop} \label{localrings}
Local rings $\displaystyle{\frac{\mathcal E_{2n}}{\tilde\pi_{\lambda}^{\ast}(\mathfrak m_q)}}$ and
$\displaystyle{\frac{\mathcal E_{n}}{\kappa_{\lambda}^{\ast}(\mathfrak m_{q-n})}}$ are isomorphic.
\end{prop}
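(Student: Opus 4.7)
The plan is to simplify $\tilde\pi_\lambda$ via a coordinate change in the source, eliminate two of the four generators of $\tilde\pi_\lambda^*(\mathfrak m_q)$, and recognise the remaining generators as a scalar multiple of the components of $\kappa_\lambda$.

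First I would observe that, since $\lambda\neq 0,1$, the map
$$\Phi:(y,z,\tilde y,v)\mapsto(y,z,u',v'),\qquad u':=\lambda y+(1-\lambda)\tilde y,\quad v':=\lambda\psi(y,z)+(1-\lambda)v,$$
is a local diffeomorphism of $(\mathbb R^{2n},0)$, its Jacobian being block-triangular with invertible diagonal blocks $I,\,I,\,(1-\lambda)I,\,(1-\lambda)I$. This $\Phi$ induces a ring automorphism of $\mathcal E_{2n}$ carrying $\tilde\pi_\lambda^*(\mathfrak m_q)$ onto the ideal generated by $u'$, $v'$, and the expressions of $\tilde\pi_\lambda^2,\tilde\pi_\lambda^3$ in the new coordinates. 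Since $u'$ and $v'$ are themselves coordinates, setting $u'=v'=0$ realises the standard isomorphism
$$\frac{\mathcal E_{2n}}{\tilde\pi_\lambda^*(\mathfrak m_q)}\ \cong\ \frac{\mathcal E_n}{\bigl\langle\,\tilde\pi_\lambda^2|_{u'=v'=0},\ \tilde\pi_\lambda^3|_{u'=v'=0}\,\bigr\rangle},$$
where on the right $\mathcal E_n$ is viewed as the ring of germs in the remaining variables $(y,z)$.

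Second, I would identify the restricted generators with the components of $\kappa_\lambda$. On the slice $\{u'=v'=0\}$ one has $\tilde y=-\tfrac{\lambda}{1-\lambda}y$ and $v=-\tfrac{\lambda}{1-\lambda}\psi(y,z)$. To apply formula (\ref{contactmap}), write $\mathcal R_0^\lambda(M^-)$ in the graph form (\ref{N}); the definition $\mathcal R_0^\lambda(x)=-\tfrac{1-\lambda}{\lambda}x$ gives the defining functions
$$\tilde\eta(y,v)=-\tfrac{1-\lambda}{\lambda}\,\eta\!\left(-\tfrac{\lambda}{1-\lambda}y,-\tfrac{\lambda}{1-\lambda}v\right),\qquad \tilde\zeta(y,v)=-\tfrac{1-\lambda}{\lambda}\,\zeta\!\left(-\tfrac{\lambda}{1-\lambda}y,-\tfrac{\lambda}{1-\lambda}v\right).$$
Substituting these into (\ref{pitilde2})-(\ref{pitilde3}) and comparing with (\ref{contactmap}) applied to $N_1=M^+$ and $N_2=\mathcal R_0^\lambda(M^-)$, a direct check yields
$$\tilde\pi_\lambda^2|_{u'=v'=0}=\lambda\bigl(z-\tilde\eta(y,\psi(y,z))\bigr),\qquad \tilde\pi_\lambda^3|_{u'=v'=0}=\lambda\bigl(\phi(y,z)-\tilde\zeta(y,\psi(y,z))\bigr),$$
i.e.\ exactly $\lambda$ times the two components of $\kappa_\lambda$. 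Since $\lambda$ is a unit in $\mathcal E_n$, the right-hand ideal above equals $\kappa_\lambda^*(\mathfrak m_{q-n})$, and the desired isomorphism follows.

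The main obstacle is the bookkeeping around the reflection: one must verify that the factors $-\tfrac{1-\lambda}{\lambda}$ absorbed when reparametrising $\mathcal R_0^\lambda(M^-)$ combine with the coefficients $\lambda$ and $1-\lambda$ in (\ref{pitilde1})-(\ref{pitilde4}) so as to produce the \emph{same} unit $\lambda$ in front of both restrictions. Beyond this arithmetic check, the argument reduces to a source diffeomorphism plus elimination of two coordinate generators from the ideal, both of which are elementary.
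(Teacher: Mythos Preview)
Your proof is correct and follows essentially the same route as the paper: eliminate the variables $\tilde y$ and $v$ via $\tilde\pi_\lambda^1=\tilde\pi_\lambda^4=0$, then identify the two remaining generators with the components of $\kappa_\lambda$ for $N_1=M^+$, $N_2=\mathcal R_0^\lambda(M^-)$. The paper is slightly terser---it divides out the unit $\lambda$ immediately and simply cites (\ref{contactmap}) without spelling out $\tilde\eta,\tilde\zeta$---but your more explicit bookkeeping of the source diffeomorphism $\Phi$ and the reflection factors is a faithful expansion of the same computation.
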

\begin{proof} From (\ref{localtildepi}), we have that
\begin{equation}
\displaystyle{\frac{\mathcal E_{2n}}{\tilde\pi_{\lambda}^{\ast}(\mathfrak m_q)}}\simeq \frac{{\mathcal E}_{(y,z,\tilde{y}, v)}}{\langle \tilde\pi_{\lambda}^1(y,\tilde{y}), \tilde\pi_{\lambda}^2(z,\tilde{y},v), \tilde\pi_{\lambda}^3(y,z,\tilde{y},v), \tilde\pi_{\lambda}^4(y,z,v)\rangle}\nonumber
\end{equation}
so that, using (\ref{pitilde1})-(\ref{pitilde4}), this is isomorphic to
\begin{equation}
\frac{{\mathcal E}_{(y, z)}}{\langle z +\frac{(1-\lambda)}{\lambda}\eta(-\frac{\lambda}{(1-\lambda)}y,-\frac{\lambda}{(1-\lambda)}\psi(y,z)), \phi(y,z)+\frac{(1-\lambda)}{\lambda}\zeta(-\frac{\lambda}{(1-\lambda)}y,-\frac{\lambda}{(1-\lambda)}\psi(y,z)) \rangle}\nonumber
\end{equation}
and, using (\ref{contactmap}) for $N_1=M^+$ and $N_2= \mathcal R_0^{\lambda}(M^-)$, we see that the above local ring is isomorphic to $\displaystyle{\frac{\mathcal E_{n}}{\kappa_{\lambda}^{\ast}(\mathfrak m_{q-n})}}$.
\end{proof}

On the other hand, we remind from Remark \ref{tan-par} that $k$ is the degree of tangency of  $M^+$ and $\mathcal R_0^{\lambda}(M^-)$ and therefore $k$ is the degree of parallelism of  $T_{a^+}M^+$ and $T_{a^-}M^-$, where $\lambda a^+ +(1-\lambda) a^- = 0\in E_{\lambda}(M)$, so that,
denoting by
$$\theta_{\lambda}: (\mathbb R^k,0)\to (\mathbb R^{k-(2n-q)},0)$$
the reduced (rank-0) contact map $\theta_{\lambda}=\kappa_{P_{N_1},P_{N_2}}$, for $N_1=M^+$ and $N_2=\mathcal R_0^{\lambda}(M^-)$, from Proposition \ref{contactreduction} we have the following
\begin{cor}\label{locrings2} The local rings  $\displaystyle{\frac{\mathcal E_{n}}{\kappa_{\lambda}^{\ast}(\mathfrak m_{q-n})}}$ and  $\displaystyle{\frac{\mathcal E_{k}}{\theta_{\lambda}^{\ast}(\mathfrak m_{k-(2n-q)})}}$ are isomorphic.
\end{cor}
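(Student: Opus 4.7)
The plan is to read this as a direct consequence of Proposition \ref{contactreduction} together with the fact that the local ring of a map-germ is invariant under both $\mathcal{K}$-equivalence and suspension. In outline, I would first extract from the proof of Proposition \ref{contactreduction} the more precise statement that $\kappa_{\lambda}=\kappa_{N_1,N_2}$ is $\mathcal{K}$-equivalent to the $(n-k)$-parameter suspension of $\theta_{\lambda}=\kappa_{P_{N_1},P_{N_2}}$, and then translate that equivalence into an isomorphism of quotient rings.

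More concretely, first I would recall from equations (\ref{M})-(\ref{N}) the presentations of $N_1=M^+$ and $N_2=\mathcal{R}_0^{\lambda}(M^-)$ and from (\ref{contactmap}) the explicit formula for $\kappa_{\lambda}(y,z)$, noting that the first block of components has the form $z-\eta(y,\psi(y,z))$, i.e.\ is a submersion in $z$. By the implicit function theorem these components can be used as a new coordinate system in the $z$-directions, after which $\kappa_{\lambda}$ takes the form $(y,z')\mapsto(z',\,\theta_{\lambda}(y)+R(y,z'))$ where the remainder $R$ lies in the ideal generated by $z'$. This shows that $\kappa_{\lambda}$ is $\mathcal{K}$-equivalent to the pure suspension $(y,z')\mapsto(z',\theta_{\lambda}(y))$, which is exactly what the construction of $S=H\cap G$ and the submanifolds $P_{N_1}=S\cap N_1,\,P_{N_2}=S\cap N_2$ accomplishes in the proof of Proposition \ref{contactreduction}.

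Second, I would invoke Theorem \ref{V-ideal}: $\mathcal{K}$-equivalent map-germs generate ideals that are related by an isomorphism of the source local ring, so the quotient rings $\mathcal{E}_{n}/\kappa_{\lambda}^{\ast}(\mathfrak{m}_{q-n})$ and $\mathcal{E}_{n}/F^{\ast}(\mathfrak{m}_{q-n})$ are isomorphic, where $F(y,z')=(z',\theta_{\lambda}(y))$ is the suspension. Third, I would compute directly that $F^{\ast}(\mathfrak{m}_{q-n})=\langle\theta_{\lambda,1}(y),\dots,\theta_{\lambda,k-(2n-q)}(y),\,z'_{1},\dots,z'_{n-k}\rangle$ inside $\mathcal{E}_{n}=\mathcal{E}_{k+(n-k)}$; eliminating the linear generators $z'_{i}$ yields the desired isomorphism
\[
\frac{\mathcal{E}_{n}}{F^{\ast}(\mathfrak{m}_{q-n})}\;\simeq\;\frac{\mathcal{E}_{k}}{\theta_{\lambda}^{\ast}(\mathfrak{m}_{k-(2n-q)})}.
\]

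The main obstacle, and the only non-bookkeeping step, is the first one: verifying cleanly that the coordinate change in the $z$-block genuinely reduces $\kappa_{\lambda}$ to a suspension of $\theta_{\lambda}$ on the nose, rather than only up to $\mathcal{K}$-equivalence with a nontrivial matrix factor $A$. This essentially repeats the argument already given in the proof of Proposition \ref{contactreduction}, where it is observed that the contact maps agree after a suitable suspension; once that is in hand, Theorem \ref{V-ideal} and the elimination of the suspension variables finish the proof with no further work.
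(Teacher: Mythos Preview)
Your proposal is correct and follows the same route as the paper, which simply records the corollary as an immediate consequence of Proposition \ref{contactreduction} (whose proof already notes that the contact maps agree after a suitable suspension); you have merely spelled out the details via Theorem \ref{V-ideal} and elimination of the suspension variables. One small remark: your ``main obstacle'' is not really an obstacle, since even if the reduction only produced a $\mathcal K$-equivalence (with a nontrivial matrix factor) rather than a literal suspension, Theorem \ref{V-ideal} would still deliver the isomorphism of local rings directly.
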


Thus, by Theorems  \ref{main} and \ref{genAstable},  Proposition \ref{localrings} and Corollary \ref{locrings2}, for the local classification of  stable singularities of affine equidistants,
we need to determine every rank-$0$ $\mathcal K$-simple map-germ
\begin{equation}\label{1}\theta_{\lambda} : (\mathbb R^k,0)\to (\mathbb R^l,0) \  ,\end{equation}
that admits a $\mathcal K$-versal deformation $F_{\lambda}: \mathbb R^k\times\mathbb R^q\to \mathbb R^l$, so that
\begin{equation}\label{2}
\tilde\pi_{\lambda} :  (F_{\lambda})^{-1}(0)=(\mathbb R^{2n},0)\to (\mathbb R^q,0) \
\end{equation}
is an $\mathcal A$-stable map. Here, $\theta_{\lambda}=\kappa_{P_{N_1},P_{N_2}}$, for $N_1=M^+$ and $N_2=\mathcal R_0^{\lambda}(M^-)$, and  $\tilde\pi_{\lambda}$ is the restriction of $\pi_{\lambda}$  to $M^+\times M^-$, so that
\begin{equation}\label{numbers}
l=k-(2n-q) \ \ , \ \ 1\leq k \leq n \ \ , \ \  2n \geq q > n\ \ ,
\end{equation}
for any pair $(2n,q)$ in the nice dimensions (Theorem \ref{nicedim}).

In other words, we unfold the map-germ $\theta_{\lambda}$ with $m$ parameters,
\begin{equation}\label{3}\tilde\pi_{\lambda} :  (\mathbb R^{m}\times\mathbb R^k,0)\to (\mathbb R^m\times\mathbb R^l,0) \ \ , \ \ (w,y)\mapsto (w,u(w,y)) \ ,\end{equation}
where $m = 2n-k$, so that $\tilde\pi_{\lambda}$ is $\mathcal A$-stable. Thus, in each case, we look for the rank-0 $\mathcal K$-simple map-germs $\theta_{\lambda}$ that can be unfolded with $m=2n-k$ parameters so that its $\mathcal K_e$-codimension $\mu$ is such that
\begin{equation}\label{codimmax} \mu\leq l+m=q \ . \end{equation}

The list of $\mathcal K$-simple map-germs $\theta_{\lambda}$ is presented in Tables $1$, $2$ and $3$, in section $2$ above. Thus, for classifying the stable singularities of affine equidistants of smooth submanifolds $M^n\subset\mathbb R^q$, all we have to do is read those Tables with respect to the numbers $k$, $l$ and $\mu$, subject to conditions  (\ref{numbers}) and (\ref{codimmax}) for each pair $(2n,q)$  in the nice dimensions.

In this way, we arrive at our main result, as follows.

\subsection{All possible stable singularities in the nice dimensions}\label{result}

First, remind the definition of $k$-parallelism, cf. (\ref{k//}). Then, we have:

\begin{thm}\label{mainresult} Let $M^n\subset\mathbb R^q$ be a smooth closed submanifold of the affine space, such that  $2n\geq q$ and  $(2n,q)$ is a pair of nice dimensions, as listed in Theorem \ref{nicedim}. Then, the possible stable singularities of the $\lambda$-affine equidistant $E_{\lambda}(M)\subset\mathbb R^q$ are listed case by case, as below. \end{thm}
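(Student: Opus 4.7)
The plan is to combine the results of Sections 2--4 into a systematic enumeration. By Theorem \ref{genAstable}, for a residual set of embeddings $\iota : M^n \hookrightarrow \mathbb R^q$ with $(2n, q)$ nice, the restriction $\tilde\pi_{\lambda} = \pi_{\lambda} \circ (\iota^+ \times \iota^-)$ of (\ref{tildepi}) is locally $\mathcal A$-stable. By Theorem \ref{main}, the $\mathcal A$-stability of $\tilde\pi_{\lambda}$ at a point $0 \in E_{\lambda}(M)$ is equivalent to $\tilde\pi_{\lambda}$ being a $\mathcal K$-versal unfolding of a rank-$0$ map-germ of finite $\mathcal K$-codimension; by Proposition \ref{localrings} together with Corollary \ref{locrings2}, the underlying rank-$0$ germ is precisely the reduced contact germ $\theta_{\lambda}: (\mathbb R^k, 0) \to (\mathbb R^{k-(2n-q)}, 0)$, where $k = \mathrm{deg}(a^+, a^-)$ is the degree of parallelism of the chord producing the singular point. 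Hence the local classification of stable singularities of $E_{\lambda}(M)$ reduces to the enumeration, for each pair $(2n, q)$ in the nice dimensions, of those rank-$0$ $\mathcal K$-simple germs $\theta_{\lambda}$ whose $\mathcal K_e$-codimension $\mu$ is bounded by $q$, as in (\ref{codimmax}).

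The second step is the enumeration itself. For each pair $(n, q)$ listed in Theorem \ref{nicedim}, I would iterate over all $k \in \{1, 2, \dots, n\}$ with $l := k - (2n - q) \ge 1$; values of $k$ giving $l \le 0$ are geometrically impossible since $\theta_{\lambda}$ must land in a positive-dimensional target, and $k > n$ is forbidden by $\dim M = n$. The shape $(k, l)$ then selects which of the three tables of Theorem \ref{k-simple} is relevant: Table \ref{ADE} when $l = 1$, Table \ref{I} when $(k, l) = (2, 2)$, and Table \ref{Giusti} when $(k, l) = (3, 2)$. No other combination admits a $\mathcal K$-simple rank-$0$ germ with the required source and target dimensions, so these combinations contribute nothing. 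Within each relevant table I would retain the normal forms satisfying $\mu \le q$, which truncates the infinite families $A_{\mu}$, $D_{\mu}$, $C^{\pm}_{k', l'}$, etc., to the finite sublists appearing in the statement.

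The final step is bookkeeping. I would carry out the enumeration for each of the ten pairs in Theorem \ref{nicedim} in turn, thereby producing the case-by-case list (summarized in the table of the Introduction). I expect the main subtlety, rather than a genuine obstacle, to lie in three places: (i) reading off the $\mathcal K_e$-codimension from each family in Tables \ref{ADE}--\ref{Giusti} to apply the bound $\mu \le q$ consistently; (ii) verifying that the omitted pair $(n, q) = (4, 6)$ really fails the nice-dimension test of Theorem \ref{nicedim}, since $(2n, q) = (8, 6)$ satisfies none of clauses (i)--(iv) there; and (iii) confirming that the diagonal $\Delta \subset M \times M$, corresponding to $k = 0$ and the vanishing-chord singularities explicitly excluded in the Introduction, contributes no entries. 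Beyond this finite combinatorial check, the statement follows directly from the general results established in Sections 2--4.
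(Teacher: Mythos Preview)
Your proposal is correct and follows essentially the same approach as the paper: reduce via Theorems \ref{genAstable} and \ref{main}, Proposition \ref{localrings}, and Corollary \ref{locrings2} to the enumeration of rank-$0$ $\mathcal K$-simple germs $\theta_{\lambda}:(\mathbb R^k,0)\to(\mathbb R^{k-(2n-q)},0)$ with $\mathcal K_e$-codimension $\mu\le q$, then read off Tables \ref{ADE}--\ref{Giusti} for each admissible $(k,l)$. One small correction to your remark (iii): the diagonal $\Delta$ corresponds to $a^+=a^-$ and hence to maximal parallelism $k=n$, not $k=0$; it is excluded by hypothesis rather than by the constraint $l\ge 1$.
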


\

\noindent{\it Curves}:

\

\noindent In this case, we have curves in $\mathbb R^2$ and the rank-$0$ contact map is $\theta_{\lambda}:\mathbb R\to\mathbb R$,  $\mu\leq 2$. From Table $1$, the stable singularities of affine equidistants can be of type $A_1$ and $A_2$.

\

\noindent{\it Surfaces}:

\

\noindent ($1$) $M^2\subset\mathbb R^3$.

 $2$-parallelism.  $\theta_{\lambda}:\mathbb R^2\to\mathbb R$,  $\mu\leq 3$.

 $E_{\lambda}(M)$ with stable singularities of types $A_1$, $A_2$ and $A_3$.

\

\noindent  ($2$) $M^2\subset\mathbb R^4$.

\noindent (i) $1$-parallelism.  $\theta_{\lambda}:\mathbb R\to\mathbb R$,  $\mu\leq 4$.

$E_{\lambda}(M)$ with stable singularities of types $A_1$, $A_2$, $A_3$ and $A_4$.

\noindent (ii) $2$-parallelism.  $\theta_{\lambda}:\mathbb R^2\to\mathbb R^2$,  $\mu\leq 4$.

 $E_{\lambda}(M)$ with stable singularities of types $C^{\pm}_{2,2}$.

\

\noindent{\it $3$-manifolds}:

\

\noindent ($1$) $M^3\subset\mathbb R^4$.

 $3$-parallelism. $\theta_{\lambda}:\mathbb R^3\to\mathbb R$,  $\mu\leq 4$.

$E_{\lambda}(M)$ with stable singularities of types $A_1$, ..., $A_4$ and $D_4^{\pm}$.

\

\noindent ($2$) $M^3\subset\mathbb R^5$.

\noindent (i) $2$-parallelism. $\theta_{\lambda}:\mathbb R^2\to\mathbb R$,  $\mu\leq 5$.

$E_{\lambda}(M)$ with stable singularities of types $A_1$, ..., $A_5$, $D_4^{\pm}$, $D_5^{\pm}$.

\noindent (ii) $3$-parallelism. $\theta_{\lambda}:\mathbb R^3\to\mathbb R^2$,  $\mu\leq 5$.

$E_{\lambda}(M)$ with stable singularites of types $S_5$.

\

\noindent ($3$) $M^3\subset\mathbb R^6$.

\noindent (i) $1$-parallelism.  $\theta_{\lambda}:\mathbb R\to\mathbb R$,  $\mu\leq 6$.

$E_{\lambda}(M)$ with stable singularities of types $A_1$, ..., $A_6$.

\noindent (ii) $2$-parallelism. $\theta_{\lambda}:\mathbb R^2\to\mathbb R^2$,  $\mu\leq 6$.

$E_{\lambda}(M)$ with stable singularities of types $C^{\pm}_{2,2}$, $C^{\pm}_{2,3}$, $C^{\pm}_{2,4}$, $C^{\pm}_{3,3}$, $C_{6}$.

\noindent (iii) $3$-parallelism. No stable singularities for $E_{\lambda}(M)$.

\

\noindent{\it $4$-manifolds}:

\

\noindent ($1$) $M^4\subset\mathbb R^5$.

 $4$-parallelism. $\theta_{\lambda}:\mathbb R^4\to\mathbb R$,  $\mu\leq 5$.

$E_{\lambda}(M)$ with stable singularities of types $A_1$, ..., $A_5$, $D_4^{\pm}$, $D_5^{\pm}$.

\

\noindent ($2$) $M^4\subset\mathbb R^6$: The map $\tilde\pi_{\lambda}:\mathbb R^8\to\mathbb R^6$ is not in nice dimensions.

\

\noindent ($3$) $M^4\subset\mathbb R^7$.

\noindent (i) $2$-parallelism.  $\theta_{\lambda}:\mathbb R^2\to\mathbb R$,  $\mu\leq 7$.

$E_{\lambda}(M)$ with stable singularities $A_1$, ..., $A_7$, $D_4^{\pm}$, ..., $D_7^{\pm}$, $E_6$, $E_7$.

\noindent (ii) $3$-parallelism. $\theta_{\lambda}:\mathbb R^3\to\mathbb R^2$,  $\mu\leq 7$.

$E_{\lambda}(M)$ with stable singularities of types $S_5$, $S_6$, $S_7$, $T_7$, $\tilde T_7$.

\noindent (iii) $4$-parallelism. No stable singularities for $E_{\lambda}(M)$.

\

\noindent ($4$) $M^4\subset\mathbb R^8$.

\noindent (i) $1$-parallelism.  $\theta_{\lambda}:\mathbb R\to\mathbb R$,  $\mu\leq 8$.

$E_{\lambda}(M)$ with stable singularities of types $A_1$, ..., $A_8$.

\noindent (ii) $2$-parallelism. $\theta_{\lambda}:\mathbb R^2\to\mathbb R^2$,  $\mu\leq 8$.

$E_{\lambda}(M)$ with stable singularities of types

$C^{\pm}_{2,2}$, $C^{\pm}_{2,3}$, $C^{\pm}_{2,4}$,
$C^{\pm}_{2,5}$, $C^{\pm}_{2,6}$, $C^{\pm}_{3,3}$, $C^{\pm}_{3,4}$, $C^{\pm}_{3,5}$, $C^{\pm}_{4,4}$, $C_{6}$, $C_{8}$, $F_{7}$, $F_{8}$.

\noindent (iii) $3$-parallelism, $4$-parallelism. No stable singularities for $E_{\lambda}(M)$.

\

\noindent{\it $5$-manifolds}:

\

\noindent ($1$) $M^5\subset\mathbb R^6$.

 $5$-parallelism. $\theta_{\lambda}:\mathbb R^5\to\mathbb R$,  $\mu\leq 6$.

$E_{\lambda}(M)$ with stable singularities $A_1$, ..., $A_6$, $D_4^{\pm}$, $D_5^{\pm}$,  $D_6^{\pm}$,  $E_6$.

\

\noindent ($2$) For all other embeddings $M^5\subset\mathbb R^q$,  no map $\tilde\pi_{\lambda}$ in nice dimensions.

\

\noindent{\it $n$-manifolds, $n\geq 6$}: No map $\tilde\pi_{\lambda}$ in nice dimensions.

\

\end{document}